\def\dist{\mathop{\rm dist}\nolimits}
\newtheorem{theorem}{Theorem}[section]
\newtheorem{thm}[theorem]{Theorem}
\newtheorem{cor}{Corollary}[section]
\newtheorem{lemma}{Lemma}[section]
\newtheorem{remark}{Remark}[section]
\newtheorem{defi}{Definition}[section]
\newtheorem{example}{Example}[section]
\numberwithin{equation}{section}
\def\R{\mathbb R}
\def\mathscr{\mathcal }
\def\diag{\text{diag}}
\newcommand{\bomega}{{\boldsymbol{\omega}}}
\newcommand{\bd}{{\mathbf{d}}}
\newcommand{\bi}{{\mathbf{i}}}
\newcommand{\bj}{{\mathbf{j}}}
\newcommand{\bp}{{\mathbf{p}}}
\newcommand{\bu}{{\mathbf{u}}}
\newcommand{\bv}{{\mathbf{v}}}
\newcommand{\bw}{{\mathbf{w}}}
\newcommand{\bx}{{\mathbf{x}}}
\newcommand{\by}{{\mathbf{y}}}
\newcommand{\SD}{{\mathcal D}}
\newcommand{\SE}{{\mathcal E}}
\begin{document}
 \title[Point-wise doubling indices of   measures]{Point-wise doubling indices of   measures and its application to
 bi-Lipschitz classification of Bedford-McMullen carpets}

 \author{Hui Rao} \address{Department of Mathematics and Statistics, Central China Normal University, Wuhan, 430079, China
} \email{hrao@mail.ccnu.edu.cn
 }

 \author{Yan-Li Xu$^*$} \address{Department of Mathematics and Statistics, Central China Normal University, Wuhan, 430079, China
} \email{xu\_yl@mails.ccnu.edu.cn
 }

\author{Yuan Zhang}
\address{Department of Mathematics and Statistics, Central China Normal University, Wuhan, 430079, China}
\email{yzhang@mail.ccnu.edu.cn}

\date{\today}
\thanks {The work is supported by NSFC No. 11971195. }

\thanks{{\bf 2020 Mathematics Subject Classification:}  28A80, 26A16.\\
 {\indent\bf Key words and phrases:}\ Bedford-McMullen carpet, point-wise doubling index, run length.}

\thanks{* Corresponding author.}

\begin{abstract}
Doubling measure was introduced by Beurling and Ahlfors
 in 1956 and now it becomes a basic concept in analysis on metric space.
In this paper, for a measure which is not doubling, we introduce a notion of point-wise doubling index, and calculate the point-wise doubling indices of uniform Bernoulli measures on
Bedford-McMullen carpets. As an application, we show that, except a small class of Bedford-McMullen carpets,  if two Bedford-McMullen carpets
are bi-Lipschitz equivalent, then they have the same fiber sequence up to a permutation.
\end{abstract}
\maketitle

\section{\textbf{Introduction}}

A Borel probability measure $\mu$ on a metric space
is said to be \emph{doubling} if
$$\underset{x\in \text{supp}(\mu),~r>0}{\sup}~\dfrac{\mu(B_{2r}(x))}{\mu(B_r(x))}
<\infty,$$
where $B_r(x)$ denotes the open ball with center $x$ and radius $r$, and $\text{supp}(\mu)$, the support of $\mu$, consists of $x$ such that $\mu(B_r(x))>0$
for every $r>0$.

Doubling measure is introduced by  Beurling and Ahlfors \cite{Beurling_1956} in 1956.
Since then,  it is an important object in the analysis of metric spaces, see for instance  \cite{Beurling_1956, Stein_1993, Koskela_2000, Heinonen_2001} \textit{etc}.
Usually, a measure is considered to be `nice' if it is doubling. But if a measure is not doubling, what can we say?
The purpose of this paper is to study changing rate of a non-doubling measure point-wisely.
Precisely, we introduce the following notion.

  \begin{defi}\textbf{(Point-wise doubling index)}
\emph{Let  $\mu$ be a measure on a metric space $(X,d)$.
  Let $0<\rho<1$. Let $\varphi:(0,1)\to \R^+$ be a     strictly decreasing  function satisfying $\lim_{r\to 0}\varphi(r)=\infty$,
  which we call a \emph{gauge function}.  For $z\in X$, the \emph{upper doubling index} of $\mu$ at $z$
  (w.r.t. $\varphi$) is defined by
 \begin{equation}\label{def:delta}
 \overline{\delta}_{\varphi}(z; \mu,\rho)=\limsup_{r\rightarrow0}\sup_{\substack {B_{\rho r}(z')\subset B_{r}(z)}}\frac{\log[\mu(B_{r}(z))/\mu(B_{\rho r}(z'))]}{\varphi(r)};
 \end{equation}
 the \emph{lower doubling index} $\underline{\delta}_\varphi(z; \mu,\rho)$ is obtained by replacing $\limsup$ by $\liminf$ in \eqref{def:delta}.
 If the two values agree, the  common value is called the \emph{doubling index} and  denoted by $\delta_\varphi(z; \mu,\rho)$. }
\end{defi}

\begin{remark}\label{doubling_index}
\emph{ It is seen that if $\mu$ is a doubling measure, then $\delta_\varphi(z; \mu,\rho) \equiv 0$ for all $z\in X$.}
\end{remark}

In this paper, we calculate doubling indices of uniform Bernoulli measures
on Bedford-McMullen carpets, and  we show that  they are  Lipschitz invariants. Thanks to these new invariants, we obtain some remarkable new results concerning the bi-Lipschitz classification of Bedford-McMullen carpets (see Theorem \ref{thm_5} and Theorem \ref{thm:fiber}).
Furthermore, we show that two Lipschitz equivalent Bedford-McMullen carpets
have the same fiber sequence up to a permutation except a small class (see Theorem \ref{thm:best}).

Let $n>m\geq 2$ be two integers and denote by $\diag (n,m)$ the $2\times2$ diagonal matrix. Let ${\SD}\subset\{0,1,\dots,n-1\}\times\{0,1,\dots,m-1\}$.
For $\bd\in\SD$, set
\begin{equation}\label{affineMap}
S_\bd(z)=\diag(n^{-1}, m^{-1})(z+\bd),
\end{equation}
then $\{S_\bd\}_{\bd\in \SD}$ is an iterated function system (IFS), its invariant set $E=K(n,m,{\SD})$, the unique non-empty compact set  satisfying $E=\bigcup_{\bd\in \SD} S_\bd(E)$, is called a \emph{Bedford-McMullen carpet (abbreviated as BM-carpet)}\cite{Bed84,M84}.
We denote by $\mu_E$ the unique Borel probability measure  supported on $E$  satisfying
$$\mu_E(\cdot)=\dfrac{1}{\# \SD}\underset{\bd\in \SD}{\sum} \mu_E\circ S_\bd^{-1}(\cdot),$$
 and  call $\mu_E$  the \emph{uniform Bernoulli measure} of $E$,
 where $\#A$ denotes the cardinality of a set $A$.

Recall that two metric spaces $(X,d_{X})$ and $(Y,d_{Y})$ are said to be \emph{Lipschitz equivalent}, denoted by $(X,d_{X})\sim(Y,d_{Y})$, if there exists a map $f:X\rightarrow Y$ which is bi-Lipschitz, that is, there is a constant $C > 0$ such that
$$
C^{-1}d_{X}(x,y)\leq d_{Y}(f(x),f(y))\leq Cd_{X}(x,y), \quad \text{for all}\ x,y\in X.
$$

 Recently, there are many works devoted to the Lipschitz classification of BM-carpets,
 see \cite{Miao2013,HZ2024,Miao2017, Rao2019, YangZh21, B, YangZh23}.  It turns out that the classification   is much  more complicated than the self-similar settings.
  Notably,  there are  a lot of Lipschitz invariants,
which divide the BM-carpets  into  many sub-families invariant under bi-Lipschitz maps.
  The most well-known Lipschitz  invariants are  various dimensions, such as  Hausdorff dimension, box dimension, Assouad dimension, \textit{etc}.
  Fraser and Yu \cite{FH18} showed that $\log m/\log n$ is a Lipschitz invariant, Rao, Yang and Zhang \cite{Rao2019},
 and Banaji and  Kolossv\'ary \cite{B}
showed that the multifractal spectrum  of the uniform Bernoulli measure on BM-carpet is a Lipschitz invariant, see also  Huang, Rao, Wen and Xu\cite{HRWX22}.

Let $E=K(n,m,\SD)$ be a BM-carpet.
We define
$$
a_{j}=\#\{i:(i,j)\in\SD\}, \quad  j=0,1,\dots, m-1,
$$
and call $(a_j)_{j=0}^{m-1}$ the \emph{fiber sequence} of $\SD$. Throughout the paper, we denote
$$
\sigma={\log m}/{\log n}. 
$$

 We say a BM-carpet is of \emph{non-doubling type} if the associated uniform Bernoulli measure  is not doubling.
Li, Wei and Wen \cite{LWW16} characterized when a Bernoulli measure on a BM-carpet is doubling.
According to their result, we have

\begin{lemma} The uniform Bernoulli measure $\mu_E$ is not doubling if and only if
 $a_0a_{m-1}>0$,  $a_0\neq a_{m-1}$, and
 $a_ja_{j+1}> 0$ for at least one $j\in \{0,1,\dots,m-2\}$.
\end{lemma}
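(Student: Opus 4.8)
The plan is to work with the family of \emph{approximate squares} of $E$. An approximate square of level $k$ is the set obtained by fixing the first $k$ digits of the $x$-coordinate and the first $\ell=\ell(k):=\lceil k/\sigma\rceil$ digits of the $y$-coordinate, so that its two side lengths are each comparable to $n^{-k}$. If its $y$-address is a word $w=j_1j_2\cdots j_\ell$ (necessarily each letter $j$ has $a_j>0$), then counting the level-$\ell$ cylinders it contains gives, with $N:=\#\SD$,
\[
\mu_E(Q)=N^{-\ell}\prod_{t=k+1}^{\ell}a_{j_t},
\]
so $\mu_E(Q)$ depends only on the $y$-address $w$, and horizontally adjacent approximate squares of the same level carry equal mass. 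Since any ball $B_r(z)$ with $r\asymp n^{-k}$ meets only $O(1)$ approximate squares of level $k$ and contains one of level $k+O(1)$, and since the mass of nested approximate squares changes by a bounded factor when the level increases by one (because $\ell(k+1)-\ell(k)=O(1)$ and $1\le a_j\le N$), the measure $\mu_E$ is doubling if and only if the ratio $\mu_E(Q)/\mu_E(Q')$ is bounded uniformly over all pairs $Q,Q'$ of approximate squares of a common level $k$ with $\dist(Q,Q')=O(n^{-k})$; equivalently, over valid length-$\ell$ words $w,w''$ whose values as base-$m$ integers differ by at most a fixed constant.

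I would prove the ``only if'' direction by contraposition: assume one of the three stated conditions fails and deduce that $\mu_E$ is doubling. The key point is that two valid length-$\ell$ words $w,w''$ with $|w-w''|$ bounded either agree outside their last $O(1)$ letters --- in which case $\mu_E(Q)/\mu_E(Q')\le N^{O(1)}$ --- or their discrepancy is produced by a carry, so that a block of letters equal to $m-1$ in $w$ becomes a block of $0$'s in $w''$ (or the reverse, by a borrow). If $a_0a_{m-1}=0$, a valid word contains no letter $m-1$ (resp.\ no letter $0$), so such a carry cannot occur and the first alternative always holds. If $a_0=a_{m-1}$, the carry is mass-neutral: replacing a block of $(m-1)$'s by a block of $0$'s leaves $\prod a_{j_t}$ unchanged, and only the single ``pivot'' letter affects the ratio, by a bounded factor. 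If $a_ja_{j+1}=0$ for every $j\in\{0,\dots,m-2\}$, a carry would force the pivot letter to be incremented to an adjacent admissible value, which is impossible; hence a carry chain of length $q$ forces $|w-w''|\ge m^{q}$, so a bounded $|w-w''|$ bounds $q$ and therefore the number of letters in which $w$ and $w''$ differ. In each of the three cases the mass ratios are bounded, so $\mu_E$ is doubling.

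For the ``if'' direction I would exhibit an explicit witness. Assume $a_0a_{m-1}>0$, $a_0\ne a_{m-1}$, and $a_ca_{c+1}>0$ for some $c\in\{0,\dots,m-2\}$. For each large $k$ put $\ell=\ell(k)$ and let $w$ be the valid word with first letter $c$ and remaining $\ell-1$ letters all equal to $m-1$; its successor as a base-$m$ integer is the valid word $w''$ with first letter $c+1$ and remaining $\ell-1$ letters all $0$. The two approximate squares $Q,Q'$ with these $y$-addresses and a common $x$-prefix share a horizontal edge, and
\[
\frac{\mu_E(Q)}{\mu_E(Q')}=\Bigl(\frac{a_{m-1}}{a_0}\Bigr)^{\ell-k},\qquad \ell-k=\lceil k/\sigma\rceil-k\xrightarrow[k\to\infty]{}\infty .
\]
Centering $z$ in the interior of whichever of $Q,Q'$ carries the \emph{smaller} mass, at distance $\asymp n^{-k}$ from the shared edge, and choosing $r$ a suitable fixed fraction of that distance, one gets $B_r(z)$ contained in that square, hence $\mu_E(B_r(z))\le\min\{\mu_E(Q),\mu_E(Q')\}$, while $B_{2r}(z)$ contains a level-$(k+O(1))$ sub-square of the other one, so $\mu_E(B_{2r}(z))\gtrsim\max\{\mu_E(Q),\mu_E(Q')\}$. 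Therefore
\[
\frac{\mu_E(B_{2r}(z))}{\mu_E(B_r(z))}\gtrsim\Bigl(\frac{\max\{a_0,a_{m-1}\}}{\min\{a_0,a_{m-1}\}}\Bigr)^{\ell-k}\xrightarrow[k\to\infty]{}\infty ,
\]
so $\mu_E$ is not doubling.

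The main obstacle is the reduction in the first paragraph --- turning ``$\mu_E$ is doubling'' into the combinatorial statement about mass ratios of nearby same-level approximate squares. This is routine in the literature on BM-carpets but needs care because of the anisotropy between the two coordinate directions and because a ball is only comparable to, not equal to, an approximate square: one must check both that a ball is squeezed between two approximate squares of comparable level and that passing between consecutive levels is mass-bounded. Once this dictionary is in place, the rest is the elementary carry/borrow bookkeeping indicated above. Alternatively, the whole statement follows by specializing the criterion of Li, Wei and Wen \cite{LWW16} for doubling of Bernoulli measures on BM-carpets to the uniform weight vector.
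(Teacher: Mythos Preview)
The paper does not prove this lemma at all: it simply records that the statement follows from the characterization of Li, Wei and Wen \cite{LWW16}, which you yourself mention as an alternative at the end. Your direct argument therefore goes well beyond what the paper supplies, and the overall strategy --- reduce doubling to bounded mass ratios between nearby same-level approximate squares, then analyse base-$m$ carries in the $y$-address --- is the right one and is essentially how \cite{LWW16} proceeds.

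There is, however, a genuine gap in your ``if'' construction. With $y$-addresses $w=c(m-1)^{\ell-1}$ and $w''=(c+1)0^{\ell-1}$ the very first $y$-letter differs, so a ``common $x$-prefix'' $x_1\cdots x_k$ would require some $x_1$ with both $(x_1,c),(x_1,c+1)\in\SD$ and, for $2\le t\le k$, some $x_t$ with both $(x_t,0),(x_t,m-1)\in\SD$. The hypotheses $a_c,a_{c+1},a_0,a_{m-1}>0$ do not guarantee either of these column coincidences, and it is easy to build digit sets where they fail; your two approximate squares may then be forced far apart horizontally. The fix is simple: move the pivot past position $k$, taking
\[
w=u_1\cdots u_k\,c\,(m-1)^{\ell-k-1},\qquad w''=u_1\cdots u_k\,(c+1)\,0^{\ell-k-1}
\]
for any admissible $u_1,\dots,u_k$. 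Now the first $k$ letters of $w$ and $w''$ agree, so a common $x$-prefix exists automatically, the two squares genuinely share a horizontal edge, and the mass ratio is $(a_{m-1}/a_0)^{\ell-k-1}$ up to the bounded factor $a_c/a_{c+1}$. Your ball argument then goes through unchanged.

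A smaller point: in case~3 of your ``only if'' direction the phrase ``a carry chain of length $q$ forces $|w-w''|\ge m^{q}$'' is mis-stated (a length-$q$ carry gives $|w-w''|=1$). What you need, and what your surrounding text makes clear you intend, is that if two \emph{valid} words first differ at position $\ell-q$ then, since the pivot letter cannot be incremented to an adjacent admissible value, one must have $|w-w''|\ge m^{q}$; hence a bounded difference forces agreement outside the last $O(1)$ letters.
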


For $\bi=\bd_{1}\ldots \bd_{k}\in\SD^k$, we define $S_{\bi}=S_{\bd_{1}}\circ\cdots\circ S_{\bd_{k}}$.
Let $z\in E$, we call $(\bd_j)_{j\geq 1}$ a \emph{coding} of $z$ if
 $\{z\}=\bigcap_{k\ge 1} S_{\bd_1\dots \bd_k}([0,1]^2).$
For convenience, we denote $\bd_j=(x_j, y_j)$ and we also write a coding as $(\bx,\by)=((x_j)_{j\geq 1}, (y_j)_{j\geq 1})$.

\begin{defi}[\cite{Miao2013} \cite{YangZh23}] \emph{We call $z$ a \emph{double vertical coding point} if $z$ has two codings $(\bx,\by)$ and $(\bx',\by')$ such that $\by\neq\by'$.  Denote
 \begin{equation}
 V_E=\{z\in E;~ z \text{ has double vertical codings}\}.
 \end{equation}
 A BM-carpet  $E$  is said to satisfy the \emph{vertical separation condition (VSC)} if $V_E=\emptyset$.}
\end{defi}

Li, Li and Miao \cite{Miao2013} proved that if $E=K(n,m,\SD)$ and $F=K(n,m, \SD')$ are totally disconnected, satisfy the VSC and share the same fiber sequence, then $E\sim F$.  Yang and Zhang \cite{YangZh23} strengthen this result by showing that  if a BM-carpet is totally disconnected and satisfies the VSC, then it is Lipschitz equivalent to a certain symbolic space.

Now we state our main results. For the lower doubling index, we have

\begin{thm}\label{thm:lower}
Let $E=K(n, m, \SD)$ be a BM-carpet of non-doubling type and assume that $a_0>a_{m-1}$.
Let $0<\rho<n^{-3}$ and  let $\varphi$ be a gauge function satisfying
\begin{equation}\label{eq:rank0}
\lim_{k\to \infty} \frac{k}{\varphi(n^{-k})}=s\in [0,+\infty].
\end{equation}
Then
$$
\underline{\delta}_\varphi(z; \mu_E,\rho)=\left \{
\begin{array}{ll}
s(1/\sigma-1)\log(a_0/a_{m-1}), &\text{  if $z\in V_E$}, \\
  0, &\text{ otherwise.}
  \end{array}
  \right .
  $$
\end{thm}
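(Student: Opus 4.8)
The plan is to replace balls by \emph{approximate squares} and reduce everything to counting digits in codings. For $z\in E$ with a coding $\bu=(\bx,\by)$ and an integer $k\ge 1$ put $\hat k:=\lceil k/\sigma\rceil$, so $n^{-k}\asymp m^{-\hat k}$, and let $Q_k(z,\bu)$ be the \emph{approximate square of level $k$}: the set of $w\in E$ having a coding that agrees with $\bx$ up to level $k$ and with $\by$ up to level $\hat k$. Counting the level-$\hat k$ cylinders inside it gives
$$\mu_E\big(Q_k(z,\bu)\big)=(\#\SD)^{-\hat k}\prod_{j=k+1}^{\hat k}a_{y_j}.$$
I will use the standard geometric estimates for Bedford--McMullen carpets: for $r\asymp n^{-k}$ only a bounded number (independent of $r$) of level-$k$ approximate squares meet $B_r(z)$, and $\mu_E(B_r(z))$ is comparable to the largest of them; moreover, writing $k':=k+\lceil\log_n(1/\rho)\rceil$ (so that $k'-k$ and $\hat k'-\hat k$ are bounded positive constants, which is where $\rho<n^{-3}$ enters), every level-$k'$ approximate square that meets some $B_{\rho r}(z')$ with $z'\in B_r(z)$ is contained in one of those level-$k$ squares $P$ and has measure comparable to $\mu_E(P)$ (constants independent of $r$). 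Consequently, with $r\asymp n^{-k}$,
$$\sup_{z'\in B_r(z)}\log\frac{\mu_E(B_r(z))}{\mu_E(B_{\rho r}(z'))}=\log\frac{\max_P\mu_E(P)}{\min_P\mu_E(P)}+O(1),$$
$P$ ranging over the level-$k$ approximate squares meeting $B_r(z)$. Finally, \eqref{eq:rank0} and the monotonicity of $\varphi$ give $\varphi(r)/\log_n(1/r)\to 1/s$ as $r\to0$ (in $[0,+\infty]$), so everything comes down to estimating $\log(\max_P\mu_E(P)/\min_P\mu_E(P))$ at scale $n^{-k}$ and dividing by $\sim k/s$.

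Suppose first $z\in V_E$, with codings $(\bx,\by),(\bx',\by')$, $\by\neq\by'$. Then $y(z)$ is $m$-adic and, exchanging the two codings if necessary, $\by$ ends in an infinite run of $0$'s and $\by'$ in an infinite run of $(m-1)$'s, and both are realizable (this is precisely the content of $z\in V_E$). For $k$ large the level-$k$ approximate squares meeting $B_r(z)$ are the ones on the two sides of the line $\{y=y(z)\}$ (together with $O(1)$ of each type horizontally): those of ``$\by$-type'' with measure $\asymp(\#\SD)^{-\hat k}(a_0)^{\hat k-k}$, and those of ``$\by'$-type'' with measure $\asymp(\#\SD)^{-\hat k}(a_{m-1})^{\hat k-k}$. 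Since $a_0>a_{m-1}$ we get $\max_P\mu_E(P)\asymp(\#\SD)^{-\hat k}(a_0)^{\hat k-k}$ and $\min_P\mu_E(P)\asymp(\#\SD)^{-\hat k}(a_{m-1})^{\hat k-k}$, so the reduction yields
$$\sup_{z'\in B_r(z)}\log\frac{\mu_E(B_r(z))}{\mu_E(B_{\rho r}(z'))}=(\hat k-k)\log\frac{a_0}{a_{m-1}}+O(1)=k\Big(\tfrac1\sigma-1\Big)\log\frac{a_0}{a_{m-1}}+O(1)$$
for every small $r$. Dividing by $\varphi(r)\sim k/s$ and letting $r\to0$ gives $\underline\delta_\varphi(z;\mu_E,\rho)=s(1/\sigma-1)\log(a_0/a_{m-1})$. (Concretely, the near-extremal sub-ball is obtained by taking $z'$ in a level-$k'$ square refining a $\by'$-type square inside $B_r(z)$, with the $y$-digits beyond level $\hat k$ chosen to realize $a_{\min}:=\min\{a_j:a_j>0\}$ and the tail kept non-eventually-constant, so that $\mu_E(B_{\rho r}(z'))\asymp\mu_E(Q_{k'}(z',\cdot))$; the matching upper bound is immediate from the reduction.)

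Now suppose $z\notin V_E$. Since $B_{\rho r}(z')\subset B_r(z)$ the quantity under the $\liminf$ is $\ge 0$, so it suffices to produce a sequence $k_j\to\infty$ along which $\max_P\mu_E(P)/\min_P\mu_E(P)=O(1)$. By the measure formula, two level-$k$ approximate squares meeting $B_r(z)$ have measures differing by a bounded factor unless the length-$\hat k$ $y$-prefix of (a coding of) $z$ contains a run of $0$'s, or of $(m-1)$'s, of length tending to $\infty$ and ending near position $\hat k$, \emph{and} some level-$k$ approximate square meeting $B_r(z)$ has $y$-prefix carrying the complementary run there. Assume this last alternative held for all large $k$. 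Because consecutive values of $\hat k$ differ by at most $\lceil 1/\sigma\rceil$, the successive long runs overlap, so the repeated digit stabilises to some $d\in\{0,m-1\}$ and $\by$ is eventually equal to $d$; thus $y(z)$ is $m$-adic, with a fixed position $N$ past which all $y$-digits equal $d$. For each large $k$ pick $z'_k\in B_{n^{-k}}(z)\cap E$ realizing a $y$-prefix with the complementary run; then $z'_k\to z$, its $y$-coding agrees with the \emph{other} base-$m$ expansion of $y(z)$ on positions $1,\dots,\hat k$, and a subsequential limit of codings of $z'_k$ (legitimate since $\SD$ is finite) is a coding of $z$ whose $y$-part is that other expansion. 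Hence $z\in V_E$, a contradiction. So there is $k_j\to\infty$ along which all level-$k_j$ approximate squares meeting $B_{n^{-k_j}}(z)$ have comparable measure, the supremum is $O(1)$ there, and $\underline\delta_\varphi(z;\mu_E,\rho)=0$.

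The technical heart, and where genuine care is needed, is the geometric input in the first paragraph: the two-sided comparison of $\mu_E(B_r(z))$ with the nearby approximate squares, and above all the precise description of which approximate squares are \emph{accessible} inside a prescribed ball together with the refinement estimate --- here one must track exactly which $(x,y)$-pairs occur in $\SD$, and this realizability bookkeeping is precisely what separates $V_E$ from general points (and the reason the scale-separation hypothesis $\rho<n^{-3}$ is imposed). The second delicate point is the dichotomy in the last paragraph: ruling out that a point $z\notin V_E$ could mimic a double-vertical-coding point at \emph{every} scale. The compactness argument above is the clean way to see it, but turning it into explicit estimates on the run-length statistics of the coding (cf.\ the keyword ``run length'') is where the bulk of the work lies.
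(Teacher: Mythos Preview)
Your approach is correct and the overall architecture---replace balls by approximate squares and reduce to the ratio $\max_P\mu_E(P)/\min_P\mu_E(P)$ over the level-$k$ squares meeting $B_r(z)$---is exactly the paper's Lemma~\ref{lem:pull-back}. One small inaccuracy: your displayed two-sided estimate ``$=\log(\max/\min)+O(1)$'' at a \emph{single} scale $r$ is stronger than what is actually true. The paper only obtains the lower bound after enlarging the outer radius by a fixed factor (there $n^3$), because one must ensure that a ball of radius $\rho r$ can be placed so that it meets \emph{only} squares of the minimal-measure row; this is also why $\rho<n^{-3}$ is needed. Since $\varphi(n^{-k})/\varphi(n^{-k\pm c})\to 1$ under \eqref{eq:rank0}, the shift is harmless for the $\liminf$, so your conclusion survives.

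The genuine difference is in the case $z\notin V_E$. The paper argues in two subcases: if $\by$ is \emph{not} eventually constant in $\{0,m-1\}$, one exhibits an explicit subsequence of scales $k_t$ (those where $(y_{j_t},y_{j_t+1})\notin\{(0,0),(m-1,m-1)\}$ and $j_t+1\le\ell(k_t)$) along which the run-length quantity $\beta(k_t;\bomega)$ is bounded, hence the ratio is bounded; if $\by$ \emph{is} eventually constant but $z\notin V_E$, one uses that $z$ has positive distance from the cylinders on the other side of the line $\{y=y(z)\}$, so for small $r$ only one row appears in $\Xi_{k}(z)$. Your argument instead handles both at once by contradiction: assume the ratio is unbounded along \emph{every} sequence, use the overlap of successive tails $\by|_{\hat k}$ to force the long runs to coalesce into an infinite run (hence $\by$ eventually constant), and then pass to a subsequential limit of codings of points in the complementary-row squares to produce a second vertical coding of $z$. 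This works, and is a clean unified route; the paper's version is more constructive and, because it isolates the run-length function $\beta(k;\bomega)$ explicitly, feeds directly into the exact formula for the \emph{upper} index in Theorem~\ref{thm:formula}, which your compactness shortcut does not by itself yield.
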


\begin{remark} \emph{ If the gauge function $\varphi$ is too big such that the limit in \eqref{eq:rank0} is zero, then $\underline{\delta}_\varphi(z; \mu_E,\rho)$ is constantly $0$, and it is not interesting.}
\end{remark}

 For $j\in \{0,1,\dots,m-1\}$, set
$
\Omega_j=\{(\bx,\by)\in \SD^\infty;$$~\text{ $\by$ ends with $j^\infty$}\}$.
Clearly
$$V_E\subset \Omega_0\cup \Omega_{m-1}.$$

To compute the upper doubling index, we  introduce a function $\beta(k;\bomega)$, a kind of   reverse run length function for a  $\bomega\in \SD^\infty$ at position $k$,  in \eqref{eq:beta} of Section 3.

\begin{thm}\label{thm:formula}
Let $E=K(n, m, \SD)$ be a BM-carpet of non-doubling type and assume that $a_0>a_{m-1}$.
Let $\varphi$ be a gauge function satisfying
\begin{equation}\label{eq:rank}
\lim_{k\to \infty} \frac{k}{\varphi(n^{-k})}=s\in (0,+\infty].
\end{equation}
Let $0<\rho< n^{-3}$. Let $z\in E$ and let $\bomega$ be a coding of $z$.
Then

(i) If $\bomega\not\in \Omega_0\cup \Omega_{m-1}$, then
$$
\overline{\delta}_\varphi(z; \mu_E,\rho)=\limsup_{k\to \infty} \frac{\beta(k;\bomega)}{\varphi(n^{-k})}\cdot \log (a_0/a_{m-1}) .
$$

(ii) If $\bomega\in \Omega_0\cup \Omega_{m-1}$, then
$
{\overline\delta}_\varphi(z; \mu_E,\rho)=s(1/\sigma-1)\log(a_0/a_{m-1})\cdot {\mathbf 1}_{V_E}(z)
$,  where ${\mathbf 1}_{V_E}$ is the indicator function of $V_E$.
\end{thm}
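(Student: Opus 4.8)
The plan is to replace Euclidean balls by \emph{approximate squares} and thereby reduce the ratio $\mu_E(B_r(z))/\mu_E(B_{\rho r}(z'))$ to a count of terminal blocks of $0$'s and $(m-1)$'s in the vertical coding. Fixing a coding $\bomega=(\bx,\by)$ of $z$, the approximate square of $z$ at scale $\asymp n^{-k}$ is the set obtained by fixing the first $k$ horizontal digits and the first $\lceil k/\sigma\rceil$ vertical digits; it has diameter $\asymp n^{-k}$ and $\mu_E$-measure $(\#\SD)^{-\lceil k/\sigma\rceil}\prod_{k<j\le \lceil k/\sigma\rceil}a_{y_j}$, which depends only on the vertical digits. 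The first step is to prove, with constants depending only on $(n,m,\SD)$, that $\mu_E(B_r(z))$ is always at most $O(1)$ times the largest $\mu_E$-measure of an approximate square of scale $\asymp r$ meeting $B_r(z)$, that this largest value is attained up to $O(1)$ for a suitable $r$ in each scale window, and similarly for $B_{\rho r}(z')$; here $\rho<n^{-3}$ guarantees that $r$ and $\rho r$ differ by a \emph{fixed} (hence $O(1)$) number of digit levels and that the inclusions between balls and approximate squares are uniform.

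Next I would record how $\mu_E$ of an approximate square changes under passing to a vertically adjacent one: horizontal neighbours have the same measure, while the neighbour directly above (resp.\ below) arises from a single carry, which turns a terminal run of $t$ copies of $m-1$ (resp.\ of $0$) into $0$'s (resp.\ $(m-1)$'s), multiplying the measure by $(a_0/a_{m-1})^{\pm\min\{t,\,\lceil k/\sigma\rceil-k\}}$ up to a bounded factor; further carries produce nothing larger. Hence, for a suitable $r$ in each scale window, $\mu_E(B_r(z))\asymp\mu_E(Q)\,(a_0/a_{m-1})^{R_+}$, where $Q$ is the approximate square of $z$ at that scale and $R_+$ is its terminal run of $(m-1)$'s, truncated at $\lceil k/\sigma\rceil-k$ and realized by the cell directly above $Q$. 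For the denominator the decisive point is that the part of the vertical coding which the competitor $z'$ is free to prescribe has \emph{bounded} length (because $\rho$ is a fixed constant), so optimizing those digits changes $\mu_E(B_{\rho r}(z'))$ only by a bounded factor; thus $\inf_{z'}\mu_E(B_{\rho r}(z'))\asymp\mu_E(Q)\,(a_{m-1}/a_0)^{R_-}$ up to a bounded factor, where $R_-$ is the terminal run of $0$'s, truncated as above, the infimum being attained by placing $z'$ in the interior of the cell directly below $Q$. Since a coding cannot terminate in a run of $0$'s and a run of $(m-1)$'s simultaneously, one of $R_\pm$ vanishes, so combining the two estimates — as a sharp upper bound for all $r$ and all $z'$, and as a value achievable for a suitable $r$ and $z'$ — gives
$$
\log\frac{\mu_E(B_r(z))}{\mu_E(B_{\rho r}(z'))}=\max\{R_+,R_-\}\cdot\log(a_0/a_{m-1})+O(1).
$$
Identifying $\max\{R_+,R_-\}$, once the bounded shifts between the levels $k$, $k+\mathrm{const}$, $\lceil k/\sigma\rceil$, $\lceil (k+\mathrm{const})/\sigma\rceil$ have been absorbed into the error, with the reverse run length $\beta(k;\bomega)$ of \eqref{eq:beta}, dividing by $\varphi(n^{-k})\to\infty$ (noting that $\beta(k;\bomega)/\varphi(n^{-k})$ is bounded and changes only by $o(1)$ within a scale window), and taking $\limsup$ over $r$, yields (i).

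For (ii), when $\bomega\in\Omega_0\cup\Omega_{m-1}$ the vertical coordinate of $z$ is a finite $m$-adic rational and its vertical coding terminates in $0^\infty$ or $(m-1)^\infty$, so the relevant terminal run grows without bound and, after truncation, equals $\lceil k/\sigma\rceil-k\sim k(1/\sigma-1)$ for all large $k$. If $z\in V_E$, then $B_r(z)$ straddles the horizontal ``seam'' at that rational and meets approximate squares on both sides; the larger has measure $\asymp(\#\SD)^{-\lceil k/\sigma\rceil}a_0^{\lceil k/\sigma\rceil-k}$, and placing $z'$ just inside the smaller side gives $\mu_E(B_{\rho r}(z'))/\mu_E(B_r(z))\asymp(a_{m-1}/a_0)^{\lceil k/\sigma\rceil-k}$ up to bounded factors. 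Since the seam is at a fixed location this holds for \emph{all} small $r$, so $\log[\mu_E(B_r(z))/\mu_E(B_{\rho r}(z'))]=k(1/\sigma-1)\log(a_0/a_{m-1})+O(1)$ uniformly, and both $\limsup$ and $\liminf$ equal $s(1/\sigma-1)\log(a_0/a_{m-1})$, in agreement with Theorem~\ref{thm:lower}. If instead $z\notin V_E$, then for all small $r$ the set $E$ lies, near $z$, entirely on one side of the seam, so every approximate square meeting $B_r(z)$ has the \emph{same} terminal block as $z$ and the numerator and denominator differ only by a bounded factor; dividing by $\varphi(n^{-k})$ sends the quotient to $0$.

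The main obstacle is the denominator estimate — the lower bound $\mu_E(B_{\rho r}(z'))\gtrsim\mu_E(Q)(a_{m-1}/a_0)^{R_-}$ valid for \emph{every} admissible $z'$: one must rule out that some clever $z'$ drives this measure exponentially below the level dictated by $z$'s terminal block. This relies on the uniform ball/approximate-square comparison, on the boundedness of the free digit range (exactly the place where $\rho<n^{-3}$ being a fixed constant, rather than $\rho\to 0$, is used), and on a careful enumeration of the approximate squares that can lie inside $B_r(z)$ together with the action of carries — including the possibility that the row needed for the cell ``directly above/below'' is empty, which is why $\beta$ must be read off from $\SD$ and $\bomega$ and not from $\bomega$ alone. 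The remaining points — the level bookkeeping, the passage from $\limsup_r$ to $\limsup_k$, and the case split in (ii) — are routine once these comparisons are in place.
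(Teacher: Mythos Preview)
Your proposal is correct and follows essentially the same route as the paper: the ball/approximate-square comparison is the paper's Lemma~\ref{lem:pull-back}, your carry analysis yielding $U(z;r,\rho)\asymp(a_0/a_{m-1})^{\beta(k;\bomega)}$ is Lemma~\ref{lem:upperbound}, the passage to the $\limsup$ is Corollary~\ref{cor:upper} together with the final argument, and your two cases in~(ii) are exactly Lemma~\ref{lem:upper-bound}. The only places where the paper is more explicit than your sketch are that the ``suitable $r$'' for the achievability direction is made concrete as $n^6r$ (with the boundary case $\beta(k;\bomega)=\ell(k)-k$ infinitely often handled by dropping to a nearby level where $\beta=\ell(k)-k-1$), and that the level bookkeeping uses only monotonicity of $\varphi$ rather than your side remark that $\beta(k;\bomega)/\varphi(n^{-k})$ is bounded---which can fail when $s=\infty$, but is not actually needed.
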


\begin{remark}\emph{From the above two theorems, we see that under the assumption \eqref{eq:rank}, $\overline{\delta}_\varphi$ and $\underline{\delta}_\varphi$
are irrelevant with $\rho$. So from now on, we will use $\overline{\delta}_\varphi(z; \mu_E)$ and
$\underline{\delta}_\varphi(z; \mu_E)$ instead of $\overline{\delta}_\varphi(z; \mu_E,\rho)$ and
$\underline{\delta}_\varphi(z; \mu_E,\rho)$ respectively}.
\end{remark}


\begin{example}\label{exam-1} \emph{ Let $E=K(n, m, \SD)$ be a BM-carpet of non-doubling type and assume that $a_0>a_{m-1}$. We are especially interested in the gauge functions $\varphi(r)=-\log r$ and $\varphi(r)=\log|\log r|$.}

\emph{
(1)  Let $\varphi(r)=-\log r$, and denote $\delta=\delta_\varphi$. Then
\begin{equation}\label{formula_delta}
\overline{\delta}(z; \mu_E)=\left \{
\begin{array}{ll}
\underset{k\rightarrow \infty}{\limsup}\dfrac{\beta(k;\bomega)}{ k}\cdot \log_n ({a_0}/{a_{m-1}}), & \text{if} ~~\bomega\notin \Omega_0\cup \Omega_{m-1},\\
 \delta_{\max} \cdot {\mathbf 1}_{V_E}(z), & \text{ otherwise },
\end{array}
\right .
\end{equation}
where $\delta_{\max}=(\log n)^{-1}(1/\sigma-1)\log ( a_0/ a_{m-1})$, and
 $\underline{\delta}(z;\mu_E)= \delta_{\max} \cdot {\mathbf 1}_{V_E}(z).$}

\emph{Moreover, we can show that for $\mu_E$-a.e. $z\in E$, ${\delta}(z; \mu_E)=0$,
and the range of $\overline{\delta}(z; \mu_E)$ is $[0,  \delta_{\max}]$. (See Theorem \ref{thm:delta}.) }

\emph{
(2) Let $\varphi(r)=\log|\log r|$, and denote $\Delta=\delta_\varphi$. Then
\begin{equation}\label{formula_Delta}
\overline{\Delta}(z; \mu_E)=\left \{
\begin{array}{ll}
\underset{k\rightarrow \infty}{\limsup}\dfrac{\beta(k;\bomega)}{\log k}\cdot\log ({a_0}/{a_{m-1}}), & \text{if} ~~\bomega\notin \Omega_0\cup \Omega_{m-1},\\
\infty \cdot {\mathbf 1}_{V_E}(z), & \text{ otherwise },
\end{array}
\right .
\end{equation}
and
 $\underline{\Delta}(z;\mu_E)=\infty \cdot {\mathbf 1}_{V_E}(z).$ }

\emph{ Moreover, we have that for $\mu_E$-a.e. $z\in E$,
\begin{equation}\label{eq_full_Mea1}
  \overline{\Delta}(z; \mu_E)=- \log_{p_0}(a_0/a_{m-1}):=\Delta_{aver},
\end{equation}
where $p_0=a_0/\#\SD$,  which tells us that $\varphi(r)=\log|\log r|$ is the  gauge function for almost every  $z\in E$. (See Theorem \ref{thm:Delta}.)
}
\end{example}


In the following, we  use the point-wise doubling indices to construct new Lipschitz invariants
for classification of  BM-carpets.  We assume that $E=K(n,m, \SD), F=K(n,m,\SD')$ are two BM-carpets.
We use $(a_j)_{j=0}^{m-1}$ and $(a'_j)_{j=0}^{m-1}$ to denote the fiber sequences
of $\SD$ and $\SD'$, respectively.
Denote by $\mu_E$ and $\mu_F$ the uniform Bernoulli measures of $E$ and $F$ respectively.

The following result shows that the VSC is a Lipschitz invariant.

\begin{thm}\label{thm_5}
Let $E=K(n, m, \SD),F=K(n, m, \SD')$ be two BM-carpets  of non-doubling type.
Let $f:E\rightarrow F$ be a bi-Lipschitz map.   Then for any $z\in E$,
\begin{equation}\label{eq_MainThm3}
\overline \delta_\varphi(z; \mu_E)= \overline \delta_\varphi(f(z); \mu_F),
\quad \underline \delta_\varphi(z; \mu_E)= \underline \delta_\varphi(f(z); \mu_F).
\end{equation}
Consequently,
$$V_F=f(V_E) \text{ and  }  \dim_H V_E=\dim_H V_F.$$ Especially,   $E$ satisfies the VSC (that is, $V_E=\emptyset$) if and only if $F$  does.
\end{thm}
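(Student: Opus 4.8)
The plan is to prove the pointwise identities \eqref{eq_MainThm3} first, and then to read off the remaining assertions. We may assume $\varphi$ satisfies \eqref{eq:rank}: otherwise the limit in \eqref{eq:rank0} is $0$, both indices vanish identically (cf. the Remark following Theorem \ref{thm:lower}), and there is nothing to prove. Under \eqref{eq:rank} the indices are independent of $\rho\in(0,n^{-3})$ (the Remark following Theorem \ref{thm:formula}), so we fix a small such $\rho$; moreover $\varphi(n^{-k-1})/\varphi(n^{-k})\to 1$, so that replacing a scale by a bounded multiple of it affects neither $\limsup$ nor $\liminf$. Let $C\ge 1$ be a bi-Lipschitz constant for $f$, so that for all $z\in E$ and $r>0$,
$$B_{r/C}(f(z))\cap F\ \subseteq\ f\big(B_r(z)\cap E\big)\ \subseteq\ B_{Cr}(f(z))\cap F,$$
and symmetrically for $f^{-1}$; thus $f$ matches balls of $E$ with balls of $F$, and accordingly matches the admissible inner balls $\{B_{\rho r}(z'):B_{\rho r}(z')\subseteq B_r(z)\}$ with a family inside $B_{Cr}(f(z))$, in both cases up to a bounded distortion of the radius. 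The one genuine difficulty is that $\mu_F$ is \emph{not} the push-forward $f_*\mu_E$, so \eqref{eq_MainThm3} is not formal and must be extracted from the structure of the uniform Bernoulli measures.

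To bridge this I would establish a local-mass comparison of the kind underlying Theorems \ref{thm:lower}--\ref{thm:formula}. Put $\ell(r)=\lceil\log_m(1/r)\rceil$. For any BM-carpet $G=K(n,m,\cdot)$, any $w\in G$ and all small $r$, one has $\mu_G(B_r(w))\asymp(\#\SD_G)^{-\ell(r)}N_G(w,r)$ with constants depending only on $n,m$, where $N_G(w,r)$ is the number of level-$\ell(r)$ cylinders of $G$ meeting $B_r(w)$ --- equivalently, up to a bounded factor, the largest number of such cylinders contained in one of the (boundedly many) approximate squares of horizontal level $\approx\log_n(1/r)$ meeting $B_r(w)$, each of which carries equal-mass level-$\ell(r)$ cells. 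Hence in the ratio defining the indices,
$$\frac{\mu_E(B_r(z))}{\mu_E(B_{\rho r}(z'))}\ \asymp\ (\#\SD)^{\,\ell(\rho r)-\ell(r)}\,\frac{N_E(z,r)}{N_E(z',\rho r)},$$
and $\ell(\rho r)-\ell(r)=\log_m(1/\rho)+O(1)$ is bounded in $r$, so the factor $(\#\SD)^{\ell(\rho r)-\ell(r)}$ is an innocuous constant. In particular one need not know that $\#\SD=\#\SD'$: the exponents of $\#\SD$ and of $\#\SD'$ cancel on their respective sides. Thus everything reduces to comparing the counting functions.

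The decisive, and in my estimation hardest, step is to prove that these counts are transported correctly: there is $\kappa=\kappa(n,m,C)\ge 1$ with
$$\kappa^{-1}N_F\big(f(z),r\big)\ \le\ N_E(z,r)\ \le\ \kappa\,N_F\big(f(z),r\big)$$
for all $z\in E$ and small $r$ (allowing, if needed, a bounded shift of the radius, which is absorbed by the regularity of $\varphi$ and the $\rho$-independence). To get these inequalities one uses that the level-$\ell(r)$ cylinders of $E$ meeting $B_r(z)$ are pairwise disjoint, each of diameter $\asymp r$, and $f$ sends them onto disjoint subsets of $F$ of diameter $\asymp r$ clustered inside $B_{O(r)}(f(z))$ (and symmetrically with $f^{-1}$ for the cylinders of $F$); converting such a family of disjoint $\asymp r$-sized pieces into a count of level-$\ell(r)$ cylinders of $F$ is where the particular cell geometry of Bedford--McMullen carpets enters --- the same circle of ideas used to show that the multifractal spectrum of the uniform Bernoulli measure is a Lipschitz invariant, and the point at which a full proof would do the real work. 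Granting the count comparison, the $N_E$-ratio above equals the analogous $N_F$-ratio at $f(z),f(z')$ (at comparable radii, with the $\sup$ over admissible inner balls on one side matched to that on the other) up to a bounded multiplicative error; passing to logarithms turns this error into an additive $O(1)$, which vanishes upon division by $\varphi(r)\to\infty$. Taking $\limsup$ (resp. $\liminf$), and invoking $\rho$-independence to strip the auxiliary factors of $C$, yields \eqref{eq_MainThm3} for both indices; the purely technical passage between $\limsup_{r\to0}$ and $\limsup_{k\to\infty}$ along $r=n^{-k}$ is handled exactly as in the proofs of Theorems \ref{thm:lower}--\ref{thm:formula}.

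It remains to deduce the corollaries. Conjugating $f$ by the isometry $(x,y)\mapsto(x,1-y)$ on the source and/or target --- which affects none of the quantities in question --- we may assume $a_0>a_{m-1}$ and $a'_0>a'_{m-1}$. Apply \eqref{eq_MainThm3} with $\varphi(r)=-\log r$, which satisfies \eqref{eq:rank} with $s=1/\log n\in(0,\infty)$. By Theorem \ref{thm:lower}, applied to $E$ and to $F$, the lower index $\underline\delta_\varphi(\,\cdot\,;\mu_E)$, resp. $\underline\delta_\varphi(\,\cdot\,;\mu_F)$, is strictly positive precisely on $V_E$, resp. $V_F$, since the constant $s(1/\sigma-1)\log(a_0/a_{m-1})$ is positive ($\sigma<1$ and $a_0>a_{m-1}$). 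Therefore
$$z\in V_E\ \Longleftrightarrow\ \underline\delta_\varphi(z;\mu_E)>0\ \Longleftrightarrow\ \underline\delta_\varphi(f(z);\mu_F)>0\ \Longleftrightarrow\ f(z)\in V_F,$$
that is, $f(V_E)=V_F$. Consequently $f|_{V_E}:V_E\to V_F$ is a bi-Lipschitz bijection, so $\dim_H V_E=\dim_H V_F$; in particular $V_E=\emptyset\iff V_F=\emptyset$, i.e. $E$ satisfies the VSC if and only if $F$ does.
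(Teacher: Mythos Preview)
Your argument has a genuine gap at precisely the point you yourself flag as ``where a full proof would do the real work'': the count comparison $\kappa^{-1}N_F(f(z),r)\le N_E(z,r)\le\kappa\,N_F(f(z),r)$ is neither proved nor reducible to an elementary geometric fact. Level-$\ell(r)$ cylinders of a BM-carpet are thin tall rectangles of height $\asymp r$ but width $\asymp r^{1/\sigma}\ll r$, so a bi-Lipschitz image of one $E$-cylinder can a priori spread across many level-$\ell(r)$ cylinders of $F$; showing that it does not is essentially the \emph{content} of the measure-equivalence theorem you allude to, not a corollary of bi-Lipschitz geometry alone. As written, then, the proposal is an outline that defers the only nontrivial step.

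The paper's proof bypasses the counting route entirely by invoking that result directly (Theorem \ref{thm:equivalent}, from \cite{HRWX22,Rao2019}): there exists $\zeta>0$ with $\zeta^{-1}\mu_E(A)\le\mu_F(f(A))\le\zeta\,\mu_E(A)$ for every Borel set $A\subset E$. Once this is available the argument is almost formal: from $f(B_{c_0^{-1}r}(z))\subset B_r(f(z))\subset f(B_{c_0 r}(z))$ one gets two-sided bounds on $\mu_F(B_r(f(z)))$ in terms of $\mu_E(B_{c_0^{\pm1}r}(z))$, and similarly for the inner balls; taking ratios, then logarithms, dividing by $\varphi(r)$, and using the $\rho$-independence gives \eqref{eq_MainThm3} in a few lines --- no cylinder counting, no need to cancel powers of $\#\SD$ against $\#\SD'$. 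Your deduction of $f(V_E)=V_F$, $\dim_H V_E=\dim_H V_F$, and the VSC equivalence from Theorem \ref{thm:lower} with $\varphi(r)=-\log r$ is correct and matches the paper.
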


Using the Lipschitz invariants $\delta_{\max}$, $\Delta_{aver}$ and $\gamma_{\max}$ (which is defined
in Section 6), we show that

\begin{thm}\label{thm:fiber}
Let $E=K(n, m, \SD),F=K(n, m, \SD')$ be two BM-carpets of non-doubling type.
Suppose  $E\sim F$ . Then

(i) the  fiber sequence of $E$ is a permutation of
 that of $F$;

 (ii) we have $a_0=a_0'~\text{and} ~a_{m-1}=a_{m-1}'$
 if we assume without loss of generality that $a_0>a_{m-1}$ and  $a_0'>a_{m-1}'$ .
\end{thm}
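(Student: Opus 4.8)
The plan is to combine the two Lipschitz invariants extracted from the doubling indices of $\mu_E$ with the multifractal-spectrum invariant already in the literature, and then to use an elementary extremal argument on the fiber sequence. Throughout we assume $a_0 > a_{m-1}$ and $a_0' > a_{m-1}'$ (if instead $a_0 = a_{m-1}$ for both carpets, the measures would be doubling, contradicting the hypothesis, so WLOG we may normalise this way, possibly after reflecting one carpet in a horizontal line, which is a bi-Lipschitz map of the ambient space).

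First I would set up the invariants. By Theorem \ref{thm_5} and the definition of $\delta_{\max}$ in Example \ref{exam-1}, the bi-Lipschitz map $f:E\to F$ forces equality of the maximal values of $\overline\delta_\varphi$ over $V_E$ and $V_F$ for the gauge $\varphi(r)=-\log r$; since $V_E,V_F\neq\emptyset$ (non-doubling type forces $a_0a_{m-1}>0$, and one checks $V_E\neq\emptyset$ in that case — this is where I must be a little careful, see below), this gives
\begin{equation}\label{eq:plan-dmax}
(1/\sigma-1)\log(a_0/a_{m-1}) = (1/\sigma-1)\log(a_0'/a_{m-1}'),
\end{equation}
hence $a_0/a_{m-1} = a_0'/a_{m-1}'$. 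Next, using the gauge $\varphi(r)=\log|\log r|$ and the almost-everywhere value $\Delta_{aver} = -\log_{p_0}(a_0/a_{m-1})$ from \eqref{eq_full_Mea1}, I would argue that $\Delta_{aver}$ is a Lipschitz invariant: a bi-Lipschitz map pushes $\mu_E$ to a measure comparable to $\mu_F$ (more precisely, one uses that $f$ maps $\mu_E$-full-measure sets to $\mu_F$-full-measure sets — this needs the standard fact that bi-Lipschitz images preserve sets of full/zero measure for these Bernoulli measures, which follows from the dimension/density machinery underlying Theorem \ref{thm:Delta}), so the $\mu_F$-a.e. value of $\overline\Delta(\cdot;\mu_F)$ equals the $\mu_E$-a.e. value of $\overline\Delta(\cdot;\mu_E)$. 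Thus $-\log_{p_0}(a_0/a_{m-1}) = -\log_{p_0'}(a_0'/a_{m-1}')$ where $p_0 = a_0/\#\SD$, $p_0' = a_0'/\#\SD'$. Combined with \eqref{eq:plan-dmax}, since the ratio $a_0/a_{m-1}=a_0'/a_{m-1}'$ is a fixed number $>1$, equality of logarithms to bases $p_0$ and $p_0'$ forces $p_0 = p_0'$, i.e. $a_0/\#\SD = a_0'/\#\SD'$.

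Now I would bring in the third invariant $\gamma_{\max}$ from Section 6 together with the multifractal-spectrum invariant of \cite{Rao2019, B}: the Lipschitz invariance of the multifractal spectrum of $\mu_E$ pins down the multiset $\{p_j := a_j/\#\SD\}$ of "fiber weights" up to the combinatorial data of how many columns sit in each row — but more directly, the spectrum determines $\#\SD$ and the collection of values $\log(a_j)$ appearing, weighted appropriately. Concretely: the box dimension of a BM-carpet is $\log_n(\#\SD^{(0)}) + \sigma\log_m(\#\SD/\#\SD^{(0)})$-type data and is a Lipschitz invariant, which together with $\log_n m$ being invariant (Fraser–Yu) gives that $\#\SD$ and $\#\{j: a_j>0\}$ are invariants; then the multifractal spectrum, being the Legendre transform of $q\mapsto \log_n(\sum_j a_j^q\cdot\text{(stuff)}) $, determines the fiber sequence $(a_j)$ as a multiset. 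This is step (i). For step (ii), once (i) is known — $(a_j')$ is a permutation of $(a_j)$ — and we have $a_0/a_{m-1}=a_0'/a_{m-1}'$ and $a_0/\#\SD = a_0'/\#\SD'$ (with $\#\SD=\#\SD'$ from (i)), we get $a_0 = a_0'$ immediately, and then $a_{m-1} = a_{m-1}'$.

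The main obstacle, I expect, is step (i): extracting "the fiber sequence as a multiset" cleanly from the available invariants. The honest route is to lean on the multifractal-spectrum result of Rao–Yang–Zhang and Banaji–Kolossváry, but one must verify that the multifractal spectrum of the uniform Bernoulli measure on $K(n,m,\SD)$ really does determine the multiset $(a_j)_{j=0}^{m-1}$ (not merely some coarser statistic), and this is a genuine computation with the known closed form of the spectrum — roughly, the spectrum is built from the two quantities $\sum_j a_j^q$ and $\#\SD$ via the $L^q$ dimensions in the two coordinate directions, and injectivity of $(a_j)\mapsto(\sum_j a_j^q)_{q}$ on multisets is a moment-problem fact. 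A secondary technical point is the claim $V_E \neq \emptyset$ for every non-doubling BM-carpet: by Lemma (Li–Wei–Wen) non-doubling forces $a_0a_{m-1}>0$ and $a_ja_{j+1}>0$ for some $j$; one then needs that $a_0>0$ and $a_{m-1}>0$ already yield a double-vertical-coding point (a point whose column-sequence can be coded ending with a digit in row $0$ or, alternatively, a digit in row $m-1$), which is where the assumption $a_0 a_{m-1} > 0$ is used — I would isolate this as a short preliminary lemma. Once these two points are pinned down, the rest is the bookkeeping above.
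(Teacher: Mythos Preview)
Your overall plan is close to the paper's, but step~(i) has a genuine gap. You assert that the multifractal spectrum of $\mu_E$ determines the fiber sequence as a multiset, planning to justify this by a moment-problem argument on $q\mapsto\sum_j a_j^q$. This is false: Theorem~\ref{thm:invariants} records precisely what the spectrum pins down, namely
\[
\tilde p=\tilde q,\qquad \frac{a_i^*}{b_i^*}=\Big(\frac{M_i'}{M_i}\Big)^{1/\sigma}=\Big(\frac{s'}{s}\Big)^{1/\sigma}=\Big(\frac{N}{N'}\Big)^{1/(1-\sigma)},
\]
which leaves a one-parameter scaling freedom and in particular allows $N\neq N'$. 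The $L^q$ spectrum of a BM-carpet uniform measure is not built from $\sum_j a_j^q$ alone; it mixes $N$, $s$ and the $a_j$ in a way invariant under exactly this rescaling, so the moment heuristic cannot close the argument, and invoking box dimension adds nothing since it is already contained in the spectrum. You name $\gamma_{\max}$ but never use it, and that is the missing piece. The paper's proof combines $\gamma_{E,\max}=\gamma_{F,\max}$ with $a_0/a_{m-1}=a_0'/a_{m-1}'$ (from $\delta_{\max}$) to obtain $a_0/a_0'=(N/N')^{1/(1-\sigma)}$; together with $a_0/N=a_0'/N'$ (from $\Delta_{aver}$) this forces $N=N'$, since $1/(1-\sigma)\neq 1$. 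Only then does Theorem~\ref{thm:invariants} collapse to $a_i^*=b_i^*$ and $M_i=M_i'$, giving (i), after which (ii) follows exactly as you say.

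Two minor remarks. Your detour through $V_E\neq\emptyset$ is unnecessary: Theorem~\ref{thm:delta} shows that the range of $\overline\delta(\cdot;\mu_E)$ over all of $E$ is $[0,\delta_{\max}]$, with $\delta_{\max}$ attained at explicitly constructed points outside $V_E$, so matching suprema via \eqref{eq_MainThm3} yields $\delta_{E,\max}=\delta_{F,\max}$ regardless. And the claim that $f$ carries $\mu_E$-full-measure sets to $\mu_F$-full-measure sets is exactly Theorem~\ref{thm:equivalent}, so that step is fine.
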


We use ${\mathcal M}_{t,v,d,r}(n,m)$ to denote the class of BM-carpets
which are totally disconnected (which is indicated by $t$) with expanding matrix $\text{diag}(n,m)$,
possess vacant rows (indicated by $v$),
are of doubling type (indicated by $d$), and $\log m/\log n$ is rational (indicated by $r$).
It is well-known that these four properties are all Lipschitz invariants.

\begin{thm}\label{thm:best}
Let $E=K(n, m, \SD),F=K(n, m, \SD')$ be two BM-carpets
which are not in ${\mathcal M}_{t,v,d,r}(n,m)$.
Then  $E\sim F$ implies that  the  fiber sequence of $E$ is a permutation of
 that of $F$.
\end{thm}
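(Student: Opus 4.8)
The plan is to peel off the non-doubling carpets, which are already covered by Theorem \ref{thm:fiber}, and then treat the doubling-type carpets by the classical bi-Lipschitz invariants of BM-carpets, exploiting whichever of three structural alternatives the hypothesis $E,F\notin{\mathcal M}_{t,v,d,r}(n,m)$ forces. First I would use that "being of doubling type" is a bi-Lipschitz invariant (one of the four invariance facts recalled before the statement), so under $E\sim F$ the carpets $E$ and $F$ are both of non-doubling type or both of doubling type; in the first case Theorem \ref{thm:fiber}(i) gives the conclusion, so assume both are of doubling type. Since $E\notin{\mathcal M}_{t,v,d,r}(n,m)$ is of doubling type, at least one of the following holds, and — these properties being bi-Lipschitz invariants — $F$ satisfies the same one: (a) $\sigma=\log m/\log n$ is irrational; (b) $E$ is not totally disconnected; (c) $E$ has no vacant row. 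It then suffices to prove, in each case, that the fiber multiset $\{a_0,\dots,a_{m-1}\}$ is a bi-Lipschitz invariant.

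For case (c) and for the non-uniform-fiber instances of case (a) I would use the $L^q$-spectrum of $\mu_E$. When the nonzero values among $a_0,\dots,a_{m-1}$ are not all equal this spectrum is strictly convex; by the multifractal formalism it is then equivalent to the multifractal spectrum of $\mu_E$, hence a bi-Lipschitz invariant by \cite{Rao2019,B}, and computing the $\mu_E$-measure of the approximate squares at scale $n^{-k}$ shows that it determines (after normalising out the scalars $N=\#\SD$ and $n$) the function $q\mapsto\sum_{j:a_j>0}a_j^{q}$; from this one reads off the multiset $\{a_j:a_j>0\}$, whose cardinality is the number $M$ of nonempty rows, and then appends the $m-M$ zeros. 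In case (c), where $M=m$, this already finishes, and the remaining uniform-fiber instances of case (c) are disposed of directly by $\dim_H E=1+\log_n a$, which recovers the common fiber size $a$.

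For case (b), where $\sigma$ may be rational, I would instead invoke the structural classification of non-totally-disconnected BM-carpets — using the line-segment structure carried by their non-degenerate connected pieces and the results of \cite{Miao2013,Miao2017,YangZh21,YangZh23,HZ2024} — together with the observation that, under the usual non-degeneracy assumptions on BM-carpets, every resonant configuration left undecided by the numerical invariants in the rational regime must fail both (b) and (c), and is therefore already excluded by the running hypothesis; so in this case too the fiber sequence is forced.

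The main obstacle is the doubling case, and specifically the uniform-fiber instances of case (a): there every point-wise doubling index constructed in this paper vanishes identically (Remark \ref{doubling_index}), the measure $\mu_E$ is exact-dimensional so the multifractal spectrum degenerates to a single point, and all of $\dim_H,\dim_B,\dim_A$ collapse to $\log_m M+\log_n a$, which for special pairs $(n,m)$ fails to determine $(M,a)$ even though $\sigma$ is irrational. One must therefore bring in a genuinely finer invariant — an Assouad- or intermediate-dimension profile, a gap-sequence type quantity, or a tangent-structure argument — and the delicate point is to show that in each of the three cases the single extra hypothesis (irrationality of $\log n/\log m$, failure of total disconnectedness, or absence of vacant rows) already breaks the residual ambiguity; there is no uniform argument, which is exactly why the exceptional class ${\mathcal M}_{t,v,d,r}(n,m)$ cannot be removed.
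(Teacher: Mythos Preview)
Your overall case split (non-doubling first; then, in the doubling case, branch on which of (a) irrational $\sigma$, (b) not totally disconnected, (c) no vacant rows holds) matches the paper's, but the execution misses the clean mechanism the paper uses for (b) and (c), and leaves (a) genuinely open.

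The tool you underuse is Theorem~\ref{thm:invariants}. Once $E\sim F$, the multifractal spectra of $\mu_E,\mu_F$ coincide, and then \eqref{eq:invariants} forces all of the ratios $a_i^*/b_i^*$, $(M_i'/M_i)^{1/\sigma}$, $(s'/s)^{1/\sigma}$, $(N/N')^{1/(1-\sigma)}$ to be equal to one common value; the fiber-permutation conclusion follows the moment any \emph{one} of them is shown to equal $1$. The paper does exactly this, nesting the cases: in (c), $s=s'=m$ gives $(s'/s)^{1/\sigma}=1$, hence $N=N'$; having disposed of (c), in (b) both carpets have vacant rows, so every nontrivial connected component is a full horizontal unit segment, whence $a_1^*=b_1^*=n$ and again $N=N'$. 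Your $L^q$-spectrum route instead tries to extract $q\mapsto\sum_{j}a_j^{q}$ ``after normalising out $N$'', but $N=N'$ is precisely what is at stake --- by \eqref{eq:invariants} the spectrum only pins the multiset down up to that common scaling --- so this step is circular as written; and your treatment of (b) gestures at line segments without giving the one-line argument above.

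The genuine gap is case (a). You correctly isolate the uniform-fiber, doubling, irrational-$\sigma$ scenario as the hard point (your $\dim_H=\log_m M+\log_n a$ observation indeed shows dimensions alone cannot separate such carpets), and then leave it open, speculating about Assouad spectra, gap sequences, or tangent structure. The paper does not build a new invariant here: after nesting so that one is reduced to $E,F\in{\mathcal M}_{t,v,d}$ with $\sigma$ irrational, it simply invokes \cite[Theorem~1.4]{Rao2019}, which already delivers the fiber-permutation conclusion in that regime. Without that external input (or an equivalent argument), your proposal does not close.
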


\begin{remark} \emph{Let $E=K(n,m,\SD)$ and $F=K(n,m,\SD')$ be two BM-carpets. We remark that if they share the same fiber sequence up to a permutation, then $\mu_E$ and $\mu_F$ share the same multifractal spectrum, and this further implies
 that they share the same Hausdorff, box, Assouad and intermediate  dimensions (\cite{B, Rao2019}).}
 \end{remark}

\begin{remark}\emph{Set
 $H=\{i; (i,j),(i,j+1)\in \SD\}$  and  $I=\{i; ~~(i,0),(i,m-1)\in\SD\}.$
It is easy to show that if $H\neq \emptyset$ and $\#{I}\geq 2$, then
$
\dim_H V_E={\log (\#I)}/{\log n};
$
otherwise, $V_E$ is  either countable or empty.
}
\end{remark}

\begin{example}
\emph{Let $E=K(8,4,\SD)$ and $F=K(8,4,\SD')$ be two BM-carpets with digit sets indicated in Fig.\ref{E}.
Notice that the fiber sequence of $\SD$ is a permutation of that of $\SD'$.
They are indeed not Lipschitz equivalent, which can be obtained either by $\dim_H V_E=0\neq \dim_H V_F=\log 2/\log 8$, or by $a_0\neq a_0'$.
}
\begin{figure}[hppt]
  \begin{tikzpicture}[xscale=.5,yscale=1.0]
        \pgfmathsetmacro{\h}{12}
        \foreach \i in {\h,0} \draw(\i,0)grid++(8,4);
        \foreach \i in {{0,0},{1,0},{7,0},{3,1},{4,1},{6,1},{7,1},{2,2},{4,2},{5,2},{6,2},{1,3},{2,3}}
        \draw[fill=blue!45](\i)rectangle++(1,1);
        \foreach \i in {{0,0},{1,0},{2,0},{7,0},{4,1},{5,1},{6,1},{2,2},{3,2},{4,2},{5,2},{0,3},{7,3}}
        \draw[fill=blue!45]($(\i)+(\h,0)$) rectangle++(1,1);
  \end{tikzpicture}
  \text{\emph{(a)~$E$: $a_0=3$, $a_{m-1}=2$. ~~~~~~~~~~~\quad\qquad(b)~$F$: $a'_0=4$, $a'_{m-1}=2$.}}
 \caption{}
 \label{E}

\end{figure}
\end{example}
%

\begin{remark}
\emph{Recently, there are a lot of works devoted to the Assouad dimension
of measures on metric spaces, see \cite{KJ2013, Frazer2020, FH2020}.
R. Anttila \cite{Anttila2023} introduced a notion of pointwise Assouad dimension of a measure, which provides  another pointwise index of measures on metric spaces.
}
\end{remark}

The paper is organized as follows.
We recall some known results about approximate squares of BM-carpets in Section \ref{sec2}.
In Section \ref{mainresult1}, we prove several important lemmas.
Theorem \ref{thm:lower} and  Theorem \ref{thm:formula} are proved in Section \ref{proved Main1-2}.
We give some remarks on $\delta_{-\log r}$   and $\delta_{\log|\log r|}$ in Section \ref{mainR2_Coro1}.
We discuss an alternative point-wise doubling index in Section \ref{mainresult2}.
Theorems \ref{thm_5}, \ref{thm:fiber} and \ref{thm:best} are proved in Section \ref{mainresult3}.

\section{\textbf{Estimates of measures of approximate squares }}\label{sec2}
Let $E = K(n, m, \SD)$ be a BM-carpet. Throughout the paper, we denote
$N=\#\SD$ and use the notation $\ell(k)=\lfloor k/\sigma\rfloor$, where $\lfloor x\rfloor$ denotes the greatest integer no larger than $x$.
Clearly,
$$
m^{\ell(k)}\leq n^k<m^{\ell(k)+1}.
$$
For $\bi\in\SD^k$, we call
 $E_{\bi} = S_{\bi}(E)$ a \emph{cylinder} of $E$ of rank $k$.

For two words $\bi, \bj$, we use $\bi*\bj$ to denote the concatenation of $\bi$ and $\bj$,
and use $\bi\wedge\bj$ to denote the maximal common prefix of $\bi$ and $\bj$,
and denote by $|\bi|$ the length of $\bi$. Let $\bw|_q=w_1\ast\dots \ast w_q$ be the prefix of $\bw$ with length $q$.
For $z,z'\in\mathbb{R}^2$, we use $d(z,z')$ to denote the Euclidean distance between points $z$ and $z'$.
We define the \emph{coding} map $\pi:\SD^\infty\rightarrow E$ given by
$
\pi(\bomega)= \bigcap_{k\geq 1} S_{\bomega|_k}([0,1]^2).
$

Let $(\bx, \by)=((x_i)_{i\geq 1}, (y_i)_{i\geq 1})\in\SD^\infty$.  For $k\ge 1$, we set
\begin{equation}\label{appro-square-1}
Q_k(\bx,\by)=\pi(\left \{(\bu,\bv)\in \SD^\infty;  |\bx\wedge\bu|\geq k \text{ and } |\by\wedge\bv|\geq \ell(k)\right \}),
\end{equation}
and call it the \emph{approximate square} of $E$ of rank $k$, or \emph{ $k$-th approximate square} of $E$.


\begin{lemma}[\cite{Bed84,M84}]\label{lem:offspring} If $Q_k(\bx,\by)$  is a $k$-th approximate square of $E$,
then
$$\mu_E(Q_k(\bx,\by))= \dfrac{\prod_{j=k+1}^{\ell(k)} a_{y_j}}{N^{\ell(k)}}, ~~\text{or}~ \mu_E(Q_k(\bx,\by))=\dfrac{1}{N^k}~~ \text{if}~ \ell(k)=k.$$
\end{lemma}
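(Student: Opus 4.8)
The plan is to carry the computation over to the symbolic space $\SD^\infty$, where an approximate square becomes a finite union of cylinder sets. Let $\bar\mu$ be the Bernoulli product measure on $\SD^\infty$ giving each symbol of $\SD$ mass $1/N$. By iterating the defining relation $\mu_E=N^{-1}\sum_{\bd\in\SD}\mu_E\circ S_\bd^{-1}$ and invoking uniqueness of the invariant probability measure, one gets the standard identity $\mu_E=\pi_*\bar\mu$ (both sides satisfy the self-similarity relation, since $\pi$ intertwines $S_\bd$ with the symbol-prepending map); hence $\mu_E(Q_k(\bx,\by))=\bar\mu\big(\pi^{-1}(Q_k(\bx,\by))\big)$, and it will suffice to evaluate the right-hand side.

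Next I would unwind the definition \eqref{appro-square-1}: one has $Q_k(\bx,\by)=\pi(\SC)$ where
$$
\SC=\{(\bu,\bv)\in\SD^\infty:\ u_i=x_i\ \text{for}\ 1\le i\le k,\ \ v_i=y_i\ \text{for}\ 1\le i\le \ell(k)\}.
$$
The length-$\ell(k)$ prefixes occurring in $\SC$ are precisely the words $\bd_1\cdots\bd_{\ell(k)}\in\SD^{\ell(k)}$ with $\bd_i=(x_i,y_i)$ for $i\le k$ and $\bd_i=(u_i,y_i)$ for $k<i\le\ell(k)$, where $u_i$ ranges over the $a_{y_i}$ symbols of $\SD$ in row $y_i$; there are $\prod_{j=k+1}^{\ell(k)}a_{y_j}$ of them, and each determines a cylinder of $\SD^\infty$ of $\bar\mu$-mass $N^{-\ell(k)}$. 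Hence $\bar\mu(\SC)=\prod_{j=k+1}^{\ell(k)}a_{y_j}\cdot N^{-\ell(k)}$, which when $\ell(k)=k$ reduces to $N^{-k}$ since the product is empty; so the statement's alternative formula is just the special case $\ell(k)=k$.

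It remains to check that $\bar\mu(\pi^{-1}(\pi(\SC)))=\bar\mu(\SC)$, i.e.\ that $\bar\mu\big(\pi^{-1}(\pi(\SC))\setminus\SC\big)=0$; this is the only genuinely delicate step, the rest being bookkeeping with the product structure of $\bar\mu$. I would argue that a coding $(\bu,\bv)\notin\SC$ with $\pi(\bu,\bv)\in\pi(\SC)$ must be identified under $\pi$ with some $(\bu',\bv')\in\SC$, and since the first $k$ horizontal and first $\ell(k)$ vertical digits are pinned in $\SC$ but differ in $(\bu,\bv)$, the horizontal (resp.\ vertical) coordinate of the common image must have two distinct base-$n$ (resp.\ base-$m$) expansions; this forces the horizontal digit string of $(\bu,\bv)$ to be eventually $0^\infty$ or eventually $(n-1)^\infty$ (resp.\ the vertical one eventually $0^\infty$ or $(m-1)^\infty$). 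The $\bar\mu$-measure of the codings whose horizontal (resp.\ vertical) string is eventually equal to a fixed constant is an infinite product of the proportion of $\SD$ contained in one fixed column (resp.\ row), which is $<1$ unless $\SD$ sits in a single column (resp.\ row); that measure is therefore $0$, and a countable union over the place where the string stabilizes stays $\bar\mu$-null. In the excluded degenerate cases ($\SD$ inside a single column or a single row) the set $Q_k(\bx,\by)$ is itself a cylinder $E_\bi$ of rank $\ell(k)$ or $k$ and the formula is immediate. Putting the two pieces together gives $\mu_E(Q_k(\bx,\by))=\prod_{j=k+1}^{\ell(k)}a_{y_j}\,N^{-\ell(k)}$. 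One could equally stay in $\R^2$: $Q_k(\bx,\by)$ is the union of the $\prod_j a_{y_j}$ distinct rank-$\ell(k)$ cylinders listed above, which have pairwise disjoint interiors and $\mu_E$-null boundaries, and each has $\mu_E$-measure $N^{-\ell(k)}$ by the iterated self-similarity relation; summing gives the same answer. Either way, the main obstacle is exactly the negligibility of overlaps / non-uniqueness of codings.
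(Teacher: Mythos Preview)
Your argument is correct. The paper does not supply its own proof of this lemma; it is stated with a citation to Bedford~\cite{Bed84} and McMullen~\cite{M84} as a known formula, so there is no in-paper proof to compare against. What you have written is exactly the standard derivation: identify $\mu_E$ with the push-forward $\pi_*\bar\mu$ of the uniform Bernoulli measure on $\SD^\infty$, decompose the symbolic approximate square $\SC$ into $\prod_{j=k+1}^{\ell(k)}a_{y_j}$ cylinders of rank $\ell(k)$, and check that $\pi^{-1}(\pi(\SC))\setminus\SC$ is $\bar\mu$-null because it lies in the countable union of codings with an eventually constant horizontal or vertical digit string. Your alternative formulation directly in $\R^2$ (rank-$\ell(k)$ cylinders with pairwise disjoint interiors and $\mu_E$-null boundaries, each of mass $N^{-\ell(k)}$) is equivalent and equally standard.
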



\begin{lemma}\label{lem_esti}
Let $\bomega\in \SD^\infty$ and let $k\geq 1$, then
$$\dfrac{\mu_E(Q_{k+1}(\bomega))}{\mu_E(Q_{k}(\bomega))}\geq C_0^{-1}$$
where $C_0={nN^{1+1/\sigma}}$.
 \end{lemma}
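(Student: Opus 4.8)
The plan is to reduce the statement to the explicit formula for the $\mu_E$-measure of an approximate square given by Lemma~\ref{lem:offspring}, turning the ratio $\mu_E(Q_{k+1}(\bomega))/\mu_E(Q_k(\bomega))$ into a purely combinatorial quantity that I can bound below factor by factor. A crude containment argument does not suffice here: although $Q_{k+1}(\bomega)\subset Q_k(\bomega)$ (so the ratio is at most $1$), bounding $\mu_E(Q_{k+1}(\bomega))$ from below by the rank-$\ell(k+1)$ cylinder it contains and $\mu_E(Q_k(\bomega))$ from above by the rank-$k$ cylinder containing it only yields $N^{k-\ell(k+1)}$, whose exponent $\ell(k+1)-k$ grows linearly in $k$. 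So the exact formula is needed.

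First I would write $\bomega=(\bx,\by)$ and record two elementary facts. Since $(x_j,y_j)\in\SD\subset\{0,\dots,n-1\}\times\{0,\dots,m-1\}$ for every $j$, we have $1\le a_{y_j}\le n$ for all $j\ge 1$ (the lower bound because $(x_j,y_j)$ itself witnesses a digit in row $y_j$, the upper bound because there are only $n$ columns); in particular $\mu_E(Q_k(\bomega))>0$, so the ratio is meaningful. Second, since $\ell(k+1)=\lfloor k/\sigma+1/\sigma\rfloor\le\lfloor k/\sigma\rfloor+\lfloor 1/\sigma\rfloor+1$, I get $0\le\ell(k+1)-\ell(k)\le 1+1/\sigma$. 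With the convention that an empty product equals $1$, Lemma~\ref{lem:offspring} can be written uniformly as $\mu_E(Q_k(\bomega))=N^{-\ell(k)}\prod_{j=k+1}^{\ell(k)}a_{y_j}$, which also covers the degenerate case $\ell(k)=k$.

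Then the ratio factors as
$$
\frac{\mu_E(Q_{k+1}(\bomega))}{\mu_E(Q_k(\bomega))}
=N^{\,\ell(k)-\ell(k+1)}\cdot\frac{\prod_{j=k+2}^{\ell(k+1)}a_{y_j}}{\prod_{j=k+1}^{\ell(k)}a_{y_j}}.
$$
For the power of $N$, the bound $\ell(k+1)-\ell(k)\le 1+1/\sigma$ together with $N\ge 1$ gives $N^{\ell(k)-\ell(k+1)}\ge N^{-(1+1/\sigma)}$. For the quotient of products, I would note that, since $\ell(k+1)\ge\ell(k)$ and every factor is $\ge 1$, the numerator is at least $\prod_{j=k+2}^{\ell(k)}a_{y_j}$, while the denominator is at most $n\prod_{j=k+2}^{\ell(k)}a_{y_j}$ — either it carries one extra factor $a_{y_{k+1}}\le n$, or, when $\ell(k)=k$, it equals $1\le n$. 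Hence the quotient is $\ge 1/n$, and multiplying the two estimates yields $\mu_E(Q_{k+1}(\bomega))/\mu_E(Q_k(\bomega))\ge n^{-1}N^{-(1+1/\sigma)}=C_0^{-1}$.

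I do not expect a serious obstacle. The only points requiring care are the bookkeeping of the index ranges — so that the argument stays correct in the degenerate cases $\ell(k)=k$ and $\ell(k)=k+1$, which is handled by the empty-product convention and by phrasing the denominator estimate as ``drop at most one factor, which is $\le n$'' — and using the sharp bound $a_{y_j}\le n$ rather than a coarser one, since it is exactly this that produces the stated constant $C_0=nN^{1+1/\sigma}$.
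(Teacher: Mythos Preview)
Your proof is correct and follows essentially the same route as the paper: both arguments invoke Lemma~\ref{lem:offspring} to express the ratio as (a factor involving) $N^{\ell(k)-\ell(k+1)}$ times a quotient of fiber-products, bound the exponent via $\ell(k+1)-\ell(k)\le 1+1/\sigma$, and absorb the single extra factor $a_{y_{k+1}}\le n$. The paper compresses this into the one-line estimate $\mu_E(Q_{k+1})/\mu_E(Q_k)\ge (nN^{\ell(k+1)-\ell(k)})^{-1}\ge (nN^{1+1/\sigma})^{-1}$, whereas you spell out the index bookkeeping and the degenerate cases $\ell(k)=k$ or $k+1$ explicitly; substantively there is no difference.
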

 \begin{proof}
By Lemma \ref{lem:offspring}, we have
$$
\dfrac{\mu_E(Q_{k+1}(\bomega))}{\mu_E(Q_{k}(\bomega))}\geq \dfrac{1}{n N^{\ell(k+1)-\ell(k)}}\geq \dfrac{1}{nN^{1+1/\sigma}},
$$
and the lemma is proved.
\end{proof}

\section{\textbf{Lemmas} }\label{mainresult1}

In this section, we always assume that $E$ is a BM-carpet  of non-doubling type and satisfies $a_0>a_{m-1}$.
 Let $\mu$ be the  uniform Bernoulli measure on $E$,  let $0<\rho<n^{-3}$ and let $\varphi$
be a gauge function.

Let   $z\in E$ and let $\bomega=(\bx,\by)$ be a coding of $z$.
Let $0<r<1$.
Denote
\begin{equation}\label{def_U(z)}
U(z; r,\rho)=\sup_{\substack {z'\in E, B_{\rho r}(z')\subset B_{r}(z)}}\frac{\mu(B_{r}(z))}{\mu(B_{\rho r}(z'))},
\end{equation}
then $ \overline{\delta}_\varphi(z; \mu,\rho)=\underset{r\rightarrow0}{\limsup}\dfrac{\log U(z; r,\rho)}{\varphi(r)},$
so does $\underline{\delta}_\varphi(z; \mu,\rho)$.

In this section, we will always let $k(r)$ be the integer such that
\begin{equation}\label{eq:kr}
 \dfrac{1}{n^{k(r)+2}}< {r} \leq\dfrac{1}{n^{k(r)+1}}.
 \end{equation}
  Let
\begin{equation}\label{def_Xi}
\Xi_{k(r)}(z)=\{Q_{k(r)}(\bu,\bv);~ ~Q_{k(r)}(\bu,\bv)\cap B_r(z)\neq \emptyset\}.
\end{equation}
Since   $2r<n^{-k(r)}$,  we conclude   $\Xi_{k(r)}(z)$ contains at most four elements, and they locate
in one row,  or two adjacent rows of rank $k(r)$.

\begin{lemma}\label{lem:pull-back} There is a constant $C_1>1$ such that
\begin{equation}\label{assertion1}
U(z; r,\rho)\leq C_1\dfrac{\max \{ \mu(Q);~Q\in \Xi_{k(r)}(z)\}}
{\min \{\mu(Q);~Q\in \Xi_{k(r)}(z)\}}
\end{equation}
 and
\begin{equation}\label{assertion2}
  U(z; n^3r, \rho)\geq C_1^{-1}\dfrac{\max \{ \mu(Q);~Q\in \Xi_{k(r)}(z)\}}
{\min \{\mu(Q);~Q\in \Xi_{k(r)}(z)\}}.
\end{equation}
\end{lemma}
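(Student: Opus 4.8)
The plan is to relate the ratio $\mu(B_r(z))/\mu(B_{\rho r}(z'))$ appearing in $U(z;r,\rho)$ to ratios of measures of approximate squares of rank $k(r)$. The key geometric facts are: (a) since $2r<n^{-k(r)}$, the ball $B_r(z)$ meets at most four approximate squares of rank $k(r)$, all lying in one row or two adjacent rows; and (b) an approximate square $Q_{k(r)}(\bu,\bv)$ has side lengths $n^{-k(r)}$ (horizontal) and $m^{-\ell(k(r))}\approx n^{-k(r)}$ (vertical), so it is comparable to a ball of radius $\asymp n^{-k(r)}\asymp r$. I would first fix such comparability constants depending only on $n,m,\rho$.

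For the upper bound \eqref{assertion1}: given $z'$ with $B_{\rho r}(z')\subset B_r(z)$, I would bound $\mu(B_r(z))$ from above by the sum of $\mu(Q)$ over $Q\in\Xi_{k(r)}(z)$, hence by $4\max\{\mu(Q):Q\in\Xi_{k(r)}(z)\}$. For the denominator, $B_{\rho r}(z')$ has radius $\rho r$, and since $\rho<n^{-3}$ one can find an approximate square of some rank $k'\ge k(r)$ with $k'$ exceeding $k(r)$ by a bounded amount (controlled by $\rho$ and $n$) that is contained in $B_{\rho r}(z')$ and whose "top" cylinder is nested inside some $Q\in\Xi_{k(r)}(z)$; then Lemma \ref{lem_esti}, applied a bounded number of times, gives $\mu(B_{\rho r}(z'))\ge C_0^{-b}\mu(Q)\ge C_0^{-b}\min\{\mu(Q):Q\in\Xi_{k(r)}(z)\}$ for a bounded exponent $b=b(\rho,n)$. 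Combining yields \eqref{assertion1} with $C_1 = 4C_0^{b}$ (enlarged if needed).

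For the lower bound \eqref{assertion2}: let $Q_{\max}$ and $Q_{\min}$ achieve the max and min of $\mu$ over $\Xi_{k(r)}(z)$. Since both lie in $B_r(z)$ (or very close to it) and $B_r(z)\subset B_{n^3 r}(z)$ with room to spare, I would take a point $z'\in Q_{\min}$ and check that $B_{\rho\cdot n^3 r}(z')\subset B_{n^3 r}(z)$: indeed $\rho n^3<1$, and $Q_{\min}\cup B_{n^3 r}(z)$ are comfortably nested because $n^3 r\asymp n^{3}\cdot n^{-k(r)}$ dwarfs the side $n^{-k(r)}$ of $Q_{\min}$. Then $\mu(B_{\rho n^3 r}(z'))$ is comparable to $\mu$ of an approximate square of rank close to $k(r)$ that contains it, which by Lemma \ref{lem_esti} is comparable to $\mu(Q_{\min})$ up to a bounded factor; meanwhile $\mu(B_{n^3 r}(z))\ge \mu(Q_{\max})$. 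This gives \eqref{assertion2} with the same $C_1$ after possibly enlarging it.

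The main obstacle is the bookkeeping in passing between balls and approximate squares: an approximate square is a rectangle, not a ball, and its rank in the two coordinate directions is governed by $\ell(k)=\lfloor k/\sigma\rfloor$ rather than $k$, so "the approximate square containing $B_{\rho r}(z')$" must be chosen carefully — one must descend enough levels $k'>k(r)$ that the rank-$k'$ approximate square fits inside $B_{\rho r}(z')$, and this number of levels must be bounded uniformly in $r$ (which it is, since $\rho$ is a fixed constant $<n^{-3}$). Once the containments and the bounded number of applications of Lemma \ref{lem_esti} are set up, the inequalities follow; the constant $C_1$ will depend only on $n$, $m$, and $\rho$ (and through Lemma \ref{lem_esti}, on $N$), not on $r$ or $z$.
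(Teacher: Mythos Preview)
Your argument for \eqref{assertion1} is essentially the paper's: bound $\mu(B_r(z))\le 4\max$ and use Lemma~\ref{lem_esti} a bounded number of times (the paper makes the exponent explicit as $k_0+4$ where $n^{-(k_0+1)}<\rho\le n^{-k_0}$). That part is fine.

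The gap is in \eqref{assertion2}. You take an \emph{arbitrary} $z'\in Q_{\min}$ and then assert that $\mu(B_{\rho n^3 r}(z'))$ is comparable to $\mu$ of ``an approximate square of rank close to $k(r)$ that contains it''. But for a generic $z'\in Q_{\min}$ no such containing square need exist with rank near $k(r)$: if $z'$ sits on (or very near) the top or bottom edge of the row containing $Q_{\min}$, the ball $B_{\rho n^3 r}(z')$ can spill into the adjacent row of rank-$k(r)$ approximate squares, and those may have $\mu$-measure arbitrarily larger than $\mu(Q_{\min})$ (recall the whole point of the lemma is that the ratio $\max/\min$ over $\Xi_{k(r)}(z)$ can be unbounded in $r$). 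So the upper bound $\mu(B_{\rho n^3 r}(z'))\lesssim \mu(Q_{\min})$ fails for such $z'$, and the lower bound on $U$ does not follow. Lemma~\ref{lem_esti} alone cannot fix this, because it gives only a one-sided inequality and says nothing about neighbouring rows.

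The missing idea is to \emph{construct} $z'$ so that the small ball stays entirely inside the row of $Q_{\min}$. The paper does this explicitly: using $a_0,a_{m-1}>0$ (the non-doubling hypothesis), it takes $z'$ with coding $(\bomega'|_{\ell(k)})\binom{i_1}{0}\binom{i_2}{m-1}^\infty$, which places $z'$ at height $m^{-\ell(k)-1}$ inside the row; one then checks $\rho n^3 r<n^{-(k+1)}\le m^{-\ell(k)-1}$, so $B_{\rho n^3 r}(z')$ meets at most two rank-$k$ approximate squares, both in the same row as $Q_{\min}$. The final observation (which you also don't use) is that by Lemma~\ref{lem:offspring} all rank-$k$ approximate squares in a fixed row have the \emph{same} $\mu$-measure, so $\mu(B_{\rho n^3 r}(z'))\le 2\mu(Q_{\min})$ directly, without any appeal to Lemma~\ref{lem_esti}.
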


\begin{proof} For simplicity, we write $k:=k(r)$.

(i) Let $k_0$ be the integer such that $\dfrac{1}{n^{k_0+1}}<\rho\le\dfrac{1}{n^{k_0}}$.
Pick $z'\in E$ such that $B_{\rho r}(z')\subset B_r(z)$
and let $\bomega'$ be a coding of $z'$.
Since $\text{diam}(Q_{k+k_0+4}(\bomega'))<\dfrac{\sqrt{m^2+1}}{n^{k+k_0+4}}<\rho r$,
we have $Q_{k+k_0+4}(\bomega')\subset B_{\rho r}(z')$,  then by Lemma \ref{lem_esti} we have
\begin{equation}\label{eq_2}
\mu(B_{\rho r}(z'))\geq\mu(Q_{k+k_0+4}(\bomega'))\geq \dfrac{1}{C_0^{k_0+4}} \mu(Q_k(\bomega'))\ge \dfrac{1}{C_0^{k_0+4}}\min \{\mu(Q);~Q\in \Xi_{k}(z)\},
\end{equation}
where $C_0$ is the constant in Lemma \ref{lem_esti} and
 the last inequality holds by $Q_k(\bomega')\in \Xi_{k}(z)$
since $B_{\rho r}(z')\subset B_r(z)\subset \underset{Q\in \Xi_{k}(z)}{\bigcup}Q$.

On the other hand, $\mu(B_r(z))\leq 4\max \{ \mu(Q);~Q\in \Xi_{k}(z)\}$, and the first assertion holds
by setting $C_1=4C_0^{k_0+4}$.

 (ii)
 Clearly, $B_{n^3r}(z)$ contains all elements of $\Xi_k(z)$ since $n^3r>r+\text{diam~}Q$ for each $Q\in \Xi_k(z)$
 (we remark that it holds no matter $\Xi_k(z)$ contains  approximate squares of rank $k$ of two adjacent rows or the same row),
 then
$$\mu(B_{n^3r}(z))\geq  \max \{ \mu(Q);~Q\in \Xi_k(z)\}.$$
Suppose $Q_k(\bomega')$ attains the minimal measure in $\Xi_k(z)$.
 Since $a_1a_{m-1}\neq 0$, there exist $(i_1,0), (i_2, m-1)\in \SD$.
 Let $z'$ be the point with the coding
$$ (\bomega'|_{\ell(k)})\binom{i_1}{0} \binom{i_2}{m-1}^\infty .$$
Since $\rho n^3 r <n^{-(k+1)}$,  we have that $B_{\rho n^3 r}(z')\cap E$ is covered
by at most two  approximate squares of rank $k$ located in the same row as  $Q_k(\bomega')$, hence
 $$\mu(B_{\rho n^3 r}(z'))\leq 2 \min \{ \mu(Q);~Q\in \Xi_k(z)\},$$
 and the second assertion holds.
\end{proof}

Hereafter, we always assume that $C_1$ is the constant in Lemma \ref{lem:pull-back}.

\subsection{Reverse run length function}
Let $\SE=\{j;~a_{j}>0\}$.  Denote $a\vee b=\max\{a,b\}$.
 Now we define a kind of   reverse run length function for a sequence over $\SD$.
 Let $p\in \{0,m-1\}$ and $\bomega=(\bx,\by)\in \SD^\infty$. For $k\geq 1$, let $h_p(k;\bomega)$ be the maximal integer $h$  such that  $h\leq \ell(k)$  and $y_h\neq p$. We  define
\begin{equation*}
  \beta_0(k;\bomega)=\left \{
\begin{array}{ll}
 \ell(k)-(k\vee h_0(k;\bomega)), &
 \begin{array}{l} \text{if
 $y_{h_0(k;\bomega)}  \in \SE+1$,  }
 \end{array} \\
0, &\text{ otherwise (including  $h_0(k;\bomega)=0$);}
\end{array}
\right .
\end{equation*}
\begin{equation*}
  \beta_{m-1}(k;\bomega)=\left \{
\begin{array}{ll}
\ell(k)- (k\vee h_{m-1}(k;\bomega)), &
 \begin{array}{l}  \text{if $y_{h_{m-1}(k;\bomega)} \in \SE-1$,}
 \end{array} \\
0, &\text{ otherwise  (including  $h_{m-1}(k;\bomega)=0$).}
\end{array}
\right .
\end{equation*}
Define
\begin{equation}\label{eq:beta}
\beta(k; \bomega)=\beta_0(k;\bomega) \vee \beta_{m-1}(k;\bomega).
\end{equation}


\begin{lemma}\label{lem:upperbound}  There exists a constant $C_2>1$ such that for any $z=\pi(\bomega)\in E$ and $r\in(0,1)$,
\begin{equation}\label{esti_U(z)}
   U(z; r,\rho)\leq C_2\left (\frac{a_0}{a_{m-1}} \right )^{\beta(k(r);\bomega)};
\end{equation}
if $\beta(k(r);\bomega)\leq \ell(k(r))-k(r)-1$, then
\begin{equation}\label{esti_U}
   U(z;n^6 r,\rho)\geq C_2^{-1}\left (\frac{a_0}{a_{m-1}} \right )^{\beta(k(r);\bomega)}.
\end{equation}
\end{lemma}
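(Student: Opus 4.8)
The plan is to reduce the estimate of $U(z;r,\rho)$ to the ratio of maximal and minimal measures of approximate squares in $\Xi_{k(r)}(z)$, which is already provided by Lemma \ref{lem:pull-back}, and then to identify that ratio with a power of $a_0/a_{m-1}$ governed by the reverse run length function $\beta$. Write $k:=k(r)$. By Lemma \ref{lem:pull-back} it suffices to show
$$
C_1^{-1}\Bigl(\frac{a_0}{a_{m-1}}\Bigr)^{\beta(k;\bomega)} \lesssim \frac{\max\{\mu(Q);Q\in\Xi_k(z)\}}{\min\{\mu(Q);Q\in\Xi_k(z)\}} \lesssim C_1 \Bigl(\frac{a_0}{a_{m-1}}\Bigr)^{\beta(k;\bomega)},
$$
with implied constants absorbed into $C_2$; the passage from $n^3r$ to $n^6r$ in \eqref{esti_U} costs one extra application of Lemma \ref{lem:pull-back} (going from $U(z;n^3r,\rho)$ to $U(z;n^3\cdot n^3 r,\rho)$) plus a comparison of $\Xi_{k(r)}$ with $\Xi_{k(n^3 r)}$, which differ in rank by $3$, hence change each measure by at most a factor $C_0^3$ by Lemma \ref{lem_esti}.

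The heart of the matter is the following combinatorial computation. The at most four elements of $\Xi_k(z)$ lie in one row or two adjacent rows of rank $k$; say the relevant $y$-coordinates at level $\ell(k)$ are among $\{y_{\ell(k)}-1, y_{\ell(k)}, y_{\ell(k)}+1\}$ (more precisely, two consecutive values). By Lemma \ref{lem:offspring}, $\mu(Q_k(\bu,\bv)) = N^{-\ell(k)}\prod_{j=k+1}^{\ell(k)} a_{v_j}$, so the ratio of two such measures is $\prod_{j=k+1}^{\ell(k)} (a_{v_j}/a_{v'_j})$ where $(\bv,\bv')$ are the vertical codings of the two squares. Two approximate squares of rank $k$ that are vertically adjacent share the same $x$-coding truncation and their $y$-codings agree up to some position and then differ; the point is that the $y$-codings of the squares in $\Xi_k(z)$ agree with $\by$ (the coding of $z$) on a long common prefix and only possibly diverge near the top, governed by exactly how far back one must go from $\ell(k)$ to find a digit $y_h \ne p$ for $p \in \{0, m-1\}$ — this is precisely the definition of $h_p(k;\bomega)$. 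When $y_{h_p} \in \SE\pm 1$ (the condition allowing a carry/borrow in the $y$-direction to produce an adjacent square whose $y$-coding is $p^{\infty}$ after position $h_p$), the extremal ratio becomes $\prod_{j=(k\vee h_p)+1}^{\ell(k)} (a_j/a_p)$ over the appropriate block, and since the maximizing choice pushes every such digit to $0$ and the minimizing to $m-1$ (using $a_0 > a_{m-1}$ and $a_0, a_{m-1} > 0$), each factor is squeezed between $a_{m-1}/a_0$ and $a_0/a_{m-1}$, giving a net bound $(a_0/a_{m-1})^{\ell(k)-(k\vee h_p)} = (a_0/a_{m-1})^{\beta_p(k;\bomega)}$; taking $p$ to be the better of $0$ and $m-1$ yields $\beta(k;\bomega)$. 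When no carry is possible (the "otherwise" cases, including $h_p = 0$), the $y$-codings in $\Xi_k(z)$ agree on the whole window $[k+1,\ell(k)]$, so the ratio is $O(1)$, consistent with $\beta = 0$.

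The main obstacle I expect is the bookkeeping of exactly which approximate squares appear in $\Xi_k(z)$ and how their $y$-codings relate: one must verify that a vertically adjacent square at rank $k$ really does have a $y$-coding of the form (common prefix with $\by$)$*$(digit forced by carrying)$*p^{\infty}$, and control the first $k$ coordinates so they contribute only a bounded factor (handled by the $k\vee h_p$ in the definition, since positions $\le k$ are common to all squares in the same column block). A secondary subtlety is the lower bound \eqref{esti_U}: one needs the hypothesis $\beta(k(r);\bomega) \le \ell(k(r)) - k(r) - 1$ to guarantee that the carry position $h_p$ satisfies $h_p \ge k+1$ strictly, so that the extremal adjacent square genuinely exists within the configuration reachable in $B_{n^6 r}(z)$ and one is not merely bounding by the trivial cylinder estimate; this is the reason the lower bound is stated with the side condition while the upper bound \eqref{esti_U(z)} is unconditional. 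Once these adjacency/coding facts are pinned down, the rest is the routine telescoping of Lemma \ref{lem:offspring} together with Lemmas \ref{lem_esti} and \ref{lem:pull-back}.
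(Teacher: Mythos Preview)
Your plan is correct and follows essentially the same route as the paper: reduce via Lemma~\ref{lem:pull-back} to the max/min ratio over $\Xi_k(z)$, then use the carry/borrow structure of adjacent $y$-codings together with Lemma~\ref{lem:offspring} to identify that ratio with $(a_0/a_{m-1})^{\beta(k;\bomega)}$ up to bounded factors. For the lower bound the paper makes your phrase ``the extremal adjacent square genuinely exists'' concrete by writing down an explicit point $z''=\pi\bigl(\bomega|_{h-1}*(i',y_h-1)*(i'',m-1)^\infty\bigr)$, checking $z''\in B_{n^3r}(z)$, and then applying Lemma~\ref{lem:pull-back}(ii) at radius $n^3r$ (rank $k-3$), which is exactly the two-step shift to $n^6r$ you anticipate.
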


\begin{proof} For simplicity, we write $k:=k(r)$.

We first prove \eqref{esti_U(z)}.  Clearly $Q_k(\bomega)=Q_k(\bx,\by)\in \Xi_k(z)$.
If all elements of $\Xi_k(z)$ are located in the same row as $Q_{k}(\bx,\by)$, then  \eqref{esti_U(z)} holds by
 the first assertion of Lemma \ref{lem:pull-back}.
So in the following we assume that there exists  an approximate square
  $Q_{k}(\bu,\bv)\in \Xi_k(z)$ which locates in a  row adjacent to the row of $Q_{k}(\bx,\by)$, in other words,  $\bv|_{\ell(k)}$ is adjacent to $\by|_{\ell(k)}$ in the lexicographical order.

\medskip

\textit{Case 1. }  $y_{\ell(k)}\not\in \{0, m-1\}$.

\medskip
 In this case $\beta(k;\bomega)=0$.
 Since $\bv|_{\ell(k)}$ and $\by|_{\ell(k)}$ are adjacent in the lexicographic order, we deduce that
they differ only at the last letter.
It follows that
\begin{equation}\label{eq:middle}
\frac{1}{n} \leq \dfrac{\mu(Q_k(\bx,\by))}{\mu(Q_k(\bu,\bv))}\le n.
\end{equation}
By Lemma \ref{lem:pull-back}, we have $U(z; r,\rho)\leq C_1n$.
\eqref{esti_U(z)}  holds in this case if we set $C_2=C_1 n$.

\medskip

\textit{Case 2. }  $y_{\ell(k)}=0$ or $m-1$.

\medskip
Without loss of generality, we assume that  $y_{\ell(k)}=0$.

If $\by\prec \bv$ in the lexicographical order, then
$\bv|_{\ell(k)}=\by|_{\ell(k)-1}*1$, so \eqref{eq:middle} holds, which implies  \eqref{esti_U(z)}  holds
as we did in Case 1.

If $\bv\prec \by$, then
there exist $h$ and $j$ such that
$$
\by|_{\ell(k)}=y_1\dots y_{h-1}*(j+1)*0^{\ell(k)-h},
\quad \bv|_{\ell(k)}=y_1\dots y_{h-1}*j*(m-1)^{\ell(k)-h}.
$$
Since $\beta(k;\bomega)=0$ or $\ell(k)-k \vee h$, we deduce that
$$ \frac{1}{n} \leq  \dfrac{\mu(Q_k(\bx,\by))}{\mu(Q_k(\bu,\bv))}\leq n\left (\frac{a_0}{a_{m-1}} \right )^{\beta(k;\bomega)}.
$$
Therefore, by Lemma \ref{lem:pull-back}, no matter $\mu(Q_k(\bx,\by))\geq \mu(Q_k(\bu,\bv))$ or not, we have
$$
 U(z; r,\rho)\leq C_1n \left (\frac{a_0}{a_{m-1}} \right )^{\beta(k;\bomega)},
 $$
and \eqref{esti_U(z)} also holds in this case if we set $C_2=C_1 n$.

Next, we prove \eqref{esti_U}. Suppose  $\beta(k;\bomega)\leq \ell(k)-k-1$.

If $\beta(k;\bomega)\leq 4/\sigma$, then \eqref{esti_U} holds if we set
$C_2\geq (a_0/a_{m-1})^{4/\sigma}$.

Hence, in the following we assume without loss of generality that  $y_{\ell(k)}=0$ and $\beta(k;\bomega)>4/\sigma$.
At this time $\beta_{m-1}(k;\bomega)=0$ and $\beta_0(k;\bomega)=\beta(k;\bomega)\leq \ell(k)-(k+1)$,
then $h:=h_0(k;\bomega)\geq k+1$.
 By the definition of $\beta(k;\bomega)$, there exists $i'$ such that
$(i',y_{h}-1)\in \SD$. Since $a_{m-1}>0$, there exists $i''$ such that $(i'',m-1)\in \SD$.
We define $\bomega'=(\bu,\bv)$  as
$$\bomega'=\bomega|_{h-1}*\binom{i'}{y_{h}-1}*\binom{i''}{m-1}^\infty:=\binom{\bx'}{\by'}.$$
 Let $z''=\pi(\bomega')$. Since $z$ and $z''$ belong to a same cylinder of $E$ of rank $h-1$ and $h-1\geq k$,  we obtain
  $d(z,z'')\leq \sqrt{\frac{1}{n^{2k}}+\frac{m^2}{n^{2k}}}<\frac{1}{n^{k-1}},$
 so $z''\in B_{n^3r}(z)$.

 We shall apply Lemma \ref{lem:pull-back} to $B_{n^3r}(z)$ instead of  $B_r(z)$.
 Notice that $k(n^3r)=k(r)-3=k-3$, we define $\Xi'_{k-3}(z)$ to be the collection of approximates squares
 of rank $(k-3)$ intersecting $B_{n^3r}(z)$.  Clearly
$Q_{k-3}(\bomega), Q_{k-3}(\bomega')\in \Xi'_{k-3}(z)$,
moreover, $\beta(k;\bomega)>4/\sigma$ implies $h<\ell(k-3)$, then we have $y_{\ell(k-3)}=0$ and
$y'_{\ell(k-3)}=m-1$,  so $Q_{k-3}(\bomega)$ and $Q_{k-3}(\bomega')$ locate in different rows.
 By Lemma \ref{lem:offspring},
$$   \dfrac{\mu(Q_{k-3}(\bomega))}{\mu(Q_{k-3}(\bomega'))}\geq \dfrac{1}{n}\left (\frac{a_0}{a_{m-1}} \right )^{\beta(k-3;\bomega)}\geq \dfrac{1}{n}\left (\frac{a_0}{a_{m-1}} \right )^{-3/\sigma-1} \cdot \left (\frac{a_0}{a_{m-1}} \right )^{\beta(k;\bomega)},
$$
 which together with the second assertion of Lemma \ref{lem:pull-back} imply \eqref{esti_U}
 if we set $C_2=C_1n({a_0}/{a_{m-1}})^{3/\sigma+1}.$

 The lemma is proved for all cases by setting $C_2=C_1 n({a_0}/{a_{m-1}})^{4/\sigma}.$
\end{proof}


\begin{cor}\label{cor:upper} It holds that
\begin{equation}\label{eq:upper-upper}
 \overline{\delta}_\varphi(z; \mu,\rho)\leq \limsup_{k\to \infty} \frac{{\beta}(k; {\bomega})}{\varphi(n^{-k})}\cdot\log ({a_0}/a_{m-1}).
\end{equation}
\end{cor}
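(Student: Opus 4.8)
The plan is to read the corollary off Lemma~\ref{lem:upperbound} together with the identity $\overline{\delta}_\varphi(z;\mu,\rho)=\limsup_{r\to0}\log U(z;r,\rho)/\varphi(r)$ recorded just after \eqref{def_U(z)}; the only real work is a harmless change of scale from $r$ to $n^{-k(r)}$.

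First I would fix $z=\pi(\bomega)$ and, for $r\in(0,1)$, abbreviate $k=k(r)$ as in \eqref{eq:kr}. By the bound \eqref{esti_U(z)} of Lemma~\ref{lem:upperbound},
\[
\log U(z;r,\rho)\leq \log C_2+\beta(k;\bomega)\,\log(a_0/a_{m-1}),
\]
and I would observe that the right-hand side is nonnegative, since $C_2>1$, $\beta(k;\bomega)\geq0$ and $a_0>a_{m-1}>0$. Then, using that $\varphi$ is strictly decreasing and that $r\leq n^{-(k+1)}<n^{-k}$, one gets $\varphi(r)>\varphi(n^{-k})>0$; dividing the nonnegative quantity above by the smaller number $\varphi(n^{-k})$ yields
\[
\frac{\log U(z;r,\rho)}{\varphi(r)}\leq \frac{\log C_2}{\varphi(n^{-k(r)})}+\frac{\beta(k(r);\bomega)}{\varphi(n^{-k(r)})}\,\log(a_0/a_{m-1}).
\]

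Finally I would let $r\to0$ in this inequality. Because $k(r)$ is an integer, $k(r)\to\infty$, and every sufficiently large integer equals $k(r)$ for a suitable $r$, we have $\limsup_{r\to0}\beta(k(r);\bomega)/\varphi(n^{-k(r)})=\limsup_{k\to\infty}\beta(k;\bomega)/\varphi(n^{-k})$, while $\log C_2/\varphi(n^{-k(r)})\to0$ since $\varphi(n^{-k})\to\infty$. Hence, taking $\limsup_{r\to0}$ and using subadditivity of $\limsup$, the identity for $\overline{\delta}_\varphi$ recalled above gives precisely \eqref{eq:upper-upper}.

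I do not expect a genuine obstacle: the corollary is essentially a repackaging of Lemma~\ref{lem:upperbound}. The single point deserving a line of care is the passage from scale $r$ to scale $n^{-k(r)}$ — namely checking that replacing $\varphi(r)$ by $\varphi(n^{-k(r)})$ and discarding the additive constant $\log C_2$ cost nothing in the limit — and that is exactly where the monotonicity of $\varphi$ and $\lim_{r\to0}\varphi(r)=+\infty$ are used.
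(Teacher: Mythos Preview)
Your argument is correct and follows essentially the same route as the paper's proof: both apply the bound \eqref{esti_U(z)} of Lemma~\ref{lem:upperbound}, replace $\varphi(r)$ by $\varphi(n^{-k(r)})$ (the paper goes via $\varphi(n^{-k(r)-1})$ first) using the monotonicity of $\varphi$ and the nonnegativity of the numerator, and then absorb the constant $\log C_2$ in the limit since $\varphi\to\infty$. Your write-up is in fact slightly more explicit about why the rescaling step is licit.
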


\begin{proof}Since $k(r)$ runs over all integers as $r$ runs over $(0,1)$, we have
$$
\begin{array}{rl}
  \overline{\delta}_\varphi(z; \mu,\rho) &=   \underset{r\to 0}{\limsup} ~\dfrac{\log U(z; r,\rho)}{\varphi(r)} \\
 & \leq \underset{k\to \infty}{\limsup} ~\dfrac{\log C_2+ {\beta(k;\bomega)} \log \left (\dfrac{a_0}{a_{m-1}} \right )}{\varphi  (n^{-k-1)})}\\
 &\leq \underset{k\to \infty}{\limsup} ~\dfrac{{\beta}(k; {\bomega})}{\varphi(n^{-k})}\cdot \log ({a_0}/a_{m-1}),
 \end{array}
$$
 where the  last inequality holds  since $\varphi$ is decreasing, and the corollary is proved.
\end{proof}

\section{\textbf{Proofs of Theorem \ref{thm:lower} and Theorem \ref{thm:formula}} }\label{proved Main1-2}
In this section, we assume the same assumptions on $E=K(n, m, \SD)$ as Section \ref{mainresult1}.
\begin{lemma}\label{lem:upper-bound}
Let $\varphi$ be a gauge function satisfying
\begin{equation}
\lim_{k\to \infty} \frac{k}{\varphi(n^{-k})}=s\in (0,+\infty].
\end{equation}
  Let $z=\pi(\bomega)\in E$. If $\bomega\in \Omega_0\cup \Omega_{m-1}$, then
$$
{\delta}_\varphi(z; \mu,\rho)=s(1/\sigma-1)\log(a_0/a_{m-1})\cdot {\mathbf 1}_{V_E}(z).
$$
\end{lemma}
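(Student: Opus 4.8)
The plan is to analyse which approximate squares meet a small ball $B_r(z)$, and to show that the exponentially small ``sibling'' approximate square responsible for a large value of $U(z;r,\rho)$ is met by $B_r(z)$ for all large $k(r)$ \emph{precisely} when $z\in V_E$. By interchanging the digits $0$ and $m-1$ (and the words ``below'' and ``above'') we may assume $\bomega=(\bx,\by)\in\Omega_0$, so $\by=y_1\cdots y_{h}0^\infty$ with $y_{h}\neq 0$ for some $h\ge 1$; the degenerate case $\by=0^\infty$ is immediate, since then $z$ has $y$-coordinate $0$, so $z\notin V_E$ and $\beta(k;\bomega)\equiv 0$, and the claim follows from Corollary~\ref{cor:upper}. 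A short computation with geometric series shows that whenever $\ell(k)>h$ the point $z$ lies on the lower $y$-edge of $Q_k(\bomega)$; since $2r<m^{-\ell(k(r))}$ (as $n>m\ge2$ and $m^{\ell(k)}\le n^k$), every element of $\Xi_{k(r)}(z)$ lies either in the row of $Q_{k(r)}(\bomega)$ or in the row immediately below it. By Lemma~\ref{lem:offspring} the measure of a rank-$k$ approximate square depends only on its row, and for $k$ large $\mu(Q_k(\bomega))=a_0^{\ell(k)-k}/N^{\ell(k)}$, while every rank-$k$ approximate square in the row immediately below has measure $a_{m-1}^{\ell(k)-k}/N^{\ell(k)}$.

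Suppose first $z\in V_E$. Then $z$ has a coding $\bomega''$ whose $y$-part differs from $\by$; since the $y$-coordinate $v_0=\sum_{i\le h}y_i m^{-i}$ of $z$ is a nonzero $m$-adic rational, its only other base-$m$ expansion is $y_1\cdots y_{h-1}(y_h-1)(m-1)^\infty$, so for large $k$ the approximate square $Q_k(\bomega'')$ lies in the row immediately below $Q_k(\bomega)$ and contains $z$. Hence for large $k(r)$ both $Q_{k(r)}(\bomega)$ and $Q_{k(r)}(\bomega'')$ belong to $\Xi_{k(r)}(z)$, so the ratio of the largest to the smallest measure among the members of $\Xi_{k(r)}(z)$ is at least $(a_0/a_{m-1})^{\ell(k(r))-k(r)}$. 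Inserting this into \eqref{assertion2} of Lemma~\ref{lem:pull-back}, replacing $r$ by $n^{-3}r$, and using $\ell(k)-k=(1/\sigma-1)k+O(1)$ together with $k/\varphi(n^{-k})\to s$ and the monotonicity of $\varphi$, we obtain $\underline\delta_\varphi(z;\mu,\rho)\ge s(1/\sigma-1)\log(a_0/a_{m-1})$. On the other hand $\beta(k;\bomega)\le\ell(k)-k\le(1/\sigma-1)k$ for every $k$, so Corollary~\ref{cor:upper} gives $\overline\delta_\varphi(z;\mu,\rho)\le s(1/\sigma-1)\log(a_0/a_{m-1})$. The two bounds coincide, which proves the claim when $z\in V_E$.

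Suppose now $z\notin V_E$. Since $U(z;r,\rho)\ge 1$ always, it suffices to show $\overline\delta_\varphi(z;\mu,\rho)=0$, i.e. that $U(z;r,\rho)$ stays bounded as $r\to0$. By \eqref{assertion1} of Lemma~\ref{lem:pull-back} and the fact that all rank-$k$ approximate squares in one row have the same measure, this follows once we show that for all large $k$ no rank-$k$ approximate square lying in the row immediately below $Q_k(\bomega)$ meets $B_r(z)$ when $n^{-(k+2)}<r\le n^{-(k+1)}$ (for then $\Xi_{k(r)}(z)$ lies in a single row). We prove the contrapositive: if for infinitely many $k$ there is a rank-$k$ approximate square $Q'_k$ in the row immediately below $Q_k(\bomega)$ with $Q'_k\cap B_{r_k}(z)\neq\emptyset$ for some $r_k\le n^{-(k+1)}$, then $z\in V_E$. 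Choose $(\bu^{(k)},\bv^{(k)})\in\SD^\infty$ coding a point of $Q'_k\cap B_{r_k}(z)$ whose $y$-prefix of length $\ell(k)$ equals that of $Q'_k$, namely $y_1\cdots y_{h-1}(y_h-1)(m-1)^{\ell(k)-h}$; then $v^{(k)}_i=y_i$ for $i<h$, $v^{(k)}_h=y_h-1$, $v^{(k)}_i=m-1$ for $h<i\le\ell(k)$, and $|\sum_{i\le k}u^{(k)}_i n^{-i}-u|<2n^{-k}$, where $u$ is the $x$-coordinate of $z$. Along a subsequence on which each digit $u^{(k)}_i$ stabilises (a diagonal argument, each digit lying in the finite set $\{0,\dots,n-1\}$) we obtain $(\bu,\bv)\in\SD^\infty$ with $\bv=y_1\cdots y_{h-1}(y_h-1)(m-1)^\infty$; letting $k\to\infty$ in the last estimate gives $\sum_i u_i n^{-i}=u$, while $\sum_i v_i m^{-i}=v_0$. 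Thus $(\bu,\bv)$ codes $z$ and its $y$-part differs from $\by$, so $z\in V_E$, a contradiction. The case $\bomega\in\Omega_{m-1}$ is handled by the symmetry already noted. The delicate step is precisely this last compactness argument, which identifies ``the small sibling square is reached by $B_r(z)$'' with ``$z\in V_E$''; everything else is the bookkeeping indicated above.
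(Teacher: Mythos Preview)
Your argument is correct and, for the case $z\in V_E$, essentially identical to the paper's: both show that $Q_k(\bomega)$ and an approximate square in the adjacent row (coming from the second vertical coding) lie in $\Xi_k(z)$, compute the measure ratio $(a_0/a_{m-1})^{\ell(k)-k}$, feed it into Lemma~\ref{lem:pull-back}, and close with Corollary~\ref{cor:upper}.

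The genuine difference is in the case $z\notin V_E$. You prove the contrapositive by a sequential-compactness/diagonal argument: if $B_{r_k}(z)$ meets the lower row for infinitely many $k$, extract a limiting coding $(\bu,\bv)\in\SD^\infty$ with $\bv\neq\by$ and $\pi(\bu,\bv)=z$. The paper instead argues directly: with $\by=y_1\cdots y_{n_0}0^\infty$, it lets $K$ be the (finite) union of rank-$n_0$ cylinders of $E$ lying below the horizontal line $L$ through $z$; since $z\notin V_E$ none of these cylinders contains $z$, so $\dist(z,K)>0$, and for $r<\dist(z,K)$ the ball $B_r(z)$ cannot meet any rank-$k$ approximate square in the lower row (each such square sits inside some cylinder in $K$). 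Hence $\Xi_{k(r)}(z)$ lies in a single row and $U(z;r,\rho)\le C_1 n$. The paper's route is shorter and avoids the subsequence bookkeeping; your route is more constructive in that it explicitly produces the second coding witnessing $z\in V_E$, which makes the equivalence ``the small sibling square is reached'' $\Leftrightarrow$ ``$z\in V_E$'' completely transparent. Both approaches rely on the same structural facts (Lemma~\ref{lem:offspring}, Lemma~\ref{lem:pull-back}, Corollary~\ref{cor:upper}) and yield the same conclusion.
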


\begin{proof}
\textit{Case 1.}
   $z\not\in V_E$.

 \medskip

Without loss of generality, let us assume that  $\by$ ends with $0^\infty$.
Write $\by=y_1\cdots y_{n_0}0^\infty$ where $y_{n_0}\neq0$ for some $n_0\in\mathbb{N}$.

Let $L$ be the horizontal line containing $z$.
Since $z$ is not a double vertical coding point,  any cylinder of $E$ of rank $n_0$ below $L$ does not contain $z$.
Let $K$ be the union  of cylinders of $E$ of rank $n_0$ below $L$, then $\dist(z,K)>0$.

Let  $r< \dist(z,K)$.
Let $k=k(r)$ be defined in \eqref{eq:kr}.  Furthermore we choose $r$ small so that  $k>n_0$,
then $z$ locates at the bottom of $Q_{k}(\bomega)$,  so $\Xi_k(z)$ consists of
 at most two approximate squares of rank $k$ in the same row, and one of them  is $Q_{k}(\bomega)$.
It follows that $U(z; r, \rho)\leq C_1n$ by  Lemma \ref{lem:pull-back}. Therefore ${\delta}_\varphi(z; \mu,\rho)=0$.

\medskip
\textit{Case 2.}
$z\in V_E$.
\medskip

Let $\bomega'=(\bx',\by')$ be another coding of $z$ such that $\by'\neq \by$.
Without loss of generality, let us assume that
$$\by=y_1\dots y_{n_0}(j+1)0^\infty, \quad \by'=y_1\dots y_{n_0}j(m-1)^\infty,$$
for some $n_0\in \mathbb{N}$.
Pick $r>0$ and let $k=k(r)$ be  defined in \eqref{eq:kr}.
We choose $r$ small so that  $k>n_0$.
 Notice that  $Q_k(\bx,\by),Q_k(\bx',\by')\in \Xi_k(z)$, and $\beta(k;\bomega)=\ell(k)-k$.
 By Lemma \ref{lem:pull-back} we have
$$
U(z;n^3r,\rho)\geq C_1^{-1} \left (\dfrac{a_0}{a_{m-1}}\right )^{\beta(k;\bomega)},
$$
thus, using $\varphi(n^3 r)\leq \varphi(n^{-k})$, we have
$$
\underline{\delta}_\varphi(z; \mu,\rho)\geq \lim_{k\to \infty}  \frac{\ell(k)-k}{\varphi(n^{-k})}\log  ({a_0}/a_{m-1})=s(1/\sigma-1)\log  ({a_0}/a_{m-1}).
$$
 The other direction inequality is due to Corollary \ref{cor:upper}.
 The lemma is proved.
\end{proof}

\begin{proof}[\textbf{Proof of Theorem \ref{thm:lower}.}]
  Pick $z\in E$. Let $\bomega=(\bx,\by)$ be a coding of $z$.
  If $\bomega\in \Omega_0\cup \Omega_{m-1}$, the theorem holds by
Lemma \ref{lem:upper-bound}.

 Now we assume that $\bomega\not\in \Omega_0\cup \Omega_{m-1}$. Then
  $\by$ does not end in $0^\infty$ and $(m-1)^\infty$.
Let $(y_{j_t})_{t\geq 1}$ be a subsequence of  $\by=(y_j)_{j\geq 1}$
satisfying that
$$(y_{j_t}, y_{j_t+1})\neq   (0, 0) \text{ and } (m-1,m-1), \ \ t\geq 1.$$
 Let $k_t$ be the smallest integer such that $j_t+1\leq \ell(k_t)$, then $\beta(k_t;\bomega)\leq 1/\sigma+2$.

Set $r_t=n^{-k_t-1}$.  By the first assertion of Lemma \ref{lem:upperbound}, we have
$$\underline{\delta}_\varphi(z; \mu,\rho)\leq \liminf_{t\to \infty} \dfrac{\log C_2+(1/\sigma+2)\log(a_0/a_{m-1})}{\varphi(n^{-k_t-1})}  =0.$$
The theorem is proved.
\end{proof}


\begin{proof}[\textbf{Proof of Theorem \ref{thm:formula}}]
 Item (ii) is a consequence of Lemma \ref{lem:upper-bound}.
 In the following, we prove Item (i). Moreover, by Corollary \ref{cor:upper}, we only need to show that
\begin{equation}\label{eq:Omega}
 \overline{\delta}_\varphi(z; \mu,\rho)\geq  \limsup_{k\to \infty} \frac{{\beta}(k; {\bomega})}{\varphi(n^{-k})}\log (a_0/a_{m-1}), ~~\text{for}~~\bomega\not\in \Omega_0\cup\Omega_{m-1}.
\end{equation}

Let $z=\pi(\bomega)\in E$ with $\bomega\not\in \Omega_0\cup\Omega_{m-1}$.

Suppose $\beta(k;\bomega)= \ell(k)-k$ holds for infinitely many $k$. Let $h$ be the largest integer such that $h\leq k-1$ and
$y_{h+1}\neq 0$. Then $\beta(h;\bomega)=\ell(h)-h-1$. Therefore, there exists a subsequence $(k_j)_{j\geq 1}$ of integers   such that
 $\beta(k_j;\bomega)= \ell(k_j)-k_j-1$.
set $r_j=1/n^{k_j+1}$. By the second assertion of Lemma \ref{lem:upperbound}, we have
$$
\begin{array}{rl}
\overline\delta_\varphi(z; \mu,\rho) & \geq \underset{j\to \infty}{\limsup}\dfrac{\log U(z; n^6 r_j, \rho)}{\varphi(n^6 r_j)}
\geq \underset{j\to \infty}{\limsup}\dfrac{\log C_2^{-1}+{\beta}(k_j; {\bomega})\log (a_0/a_{m-1})}{\varphi(n^{-k_j+5})} \\
& =\underset{j\to \infty}{\limsup}\dfrac{\ell(k_j)-k_j-1}{\varphi(n^{-k_j+5})}\log (a_0/a_{m-1})
=s(1/\sigma-1)\log (a_0/a_{m-1}),
\end{array}
$$
so the lemma holds in this case since $\underset{k\to \infty}{\limsup} \frac{{\beta}(k; {\bomega})}{\varphi(n^{-k})}\leq s(1/\sigma-1)$.

Suppose there exists $k_0\in\mathbb{N}$ such that $\beta(k;\bomega)\leq \ell(k)-k-1$ holds for $k\geq k_0$.
Set $r_k=1/n^{k+1}$. Similar as above,  applying  Lemma \ref{lem:upperbound} to $k\geq k_0$ instead of $k_j$, we have
$$
\begin{array}{rl}
\overline\delta_\varphi(z; \mu,\rho) 
 \geq  \underset{k\to \infty}{\limsup}  \dfrac{ {\beta}(k; {\bomega})}{\varphi(n^{-k+5})} \log (a_0/a_{m-1})
\geq \underset{k\to \infty}{\limsup}  \dfrac{ {\beta}(k; {\bomega})}{\varphi(n^{-k})}\log (a_0/a_{m-1}).
\end{array}
$$
(The last equality holds since  $\varphi$ is decreasing.)
This proves \eqref{eq:Omega} as well as the item (i).
\end{proof}

\section{\textbf{Remarks on $\delta_{-\log r}$   and $\delta_{\log|\log r|}$}}\label{mainR2_Coro1}

  Recall that $\delta=\delta_{-\log r}$ and $\Delta=\delta_{\log|\log r|}$ (See Example \ref{exam-1}).

\subsection{Run length}
Let $\theta\in \{0,1,\dots, m-1\}$.  Set $\bomega=(\bx,\by)\in \SD^\infty$ with $\by=(y_j)_{j\geq 1}$, and let $l_\theta(k; \by)$ be the run length of  the letter $\theta$ in $\by$ at the position $k$, \textit{i.e.}, $l_\theta(k; \by)=t$ if $y_k=\cdots=y_{k+t-1}=\theta$ and
$y_{k+t}\neq \theta$. By convention we set  $l_\theta(k; \by)=0$ if $y_k\neq\theta$.
It is well known that

\begin{lemma}[Billingsley \cite{Bill94}]\label{Bill_Lemma}
Let $\Sigma=\{0,1,\dots, m-1\}$. Let $\bp=(p_0,\dots, p_{m-1})$ be a probability weight which allows
$p_j=0$. Let $\nu_{\bp}$ be the  Bernoulli measure on $\Sigma^\infty$ associated with $\bp$, then
for all $p_\theta\neq 0$,
$$
{\nu_{\bp}}\left(\left \{  \by\in \Sigma^\infty; ~\underset{k\rightarrow \infty}{\limsup} \dfrac{l_\theta(k;\by)}{-\log_{p_\theta} k}=1\right \}  \right)=1.
$$
\end{lemma}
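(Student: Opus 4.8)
The plan is to establish the two inequalities
$\limsup_{k\to\infty} l_\theta(k;\by)/(-\log_{p_\theta}k)\le 1$ and $\ge 1$ separately, each holding for $\nu_{\bp}$-a.e. $\by$, and then intersect countably many full-measure events (taking $\varepsilon=1/N$, $N\in\N$) to get the exact value $1$. Both halves are applications of the Borel--Cantelli lemmas; the only subtlety is that for consecutive positions $k$ the events ``$l_\theta(k;\by)$ is large'' are highly dependent, which forces a sparse-block device in the lower bound. Throughout we use that under $\nu_{\bp}$ the coordinates are i.i.d.\ with $\nu_{\bp}(y_k=\theta)=p_\theta$, so that $\{l_\theta(k;\by)\ge t\}=\{y_k=\cdots=y_{k+t-1}=\theta\}$ has probability exactly $p_\theta^t$.

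\textbf{Upper bound.} Fix $\varepsilon>0$ and set $A_k=\{\by:\ l_\theta(k;\by)\ge (1+\varepsilon)(-\log_{p_\theta}k)\}$. Since $l_\theta(k;\by)\ge t$ with $t$ an integer exceeding $(1+\varepsilon)(-\log_{p_\theta}k)$ has probability $p_\theta^t\le p_\theta^{(1+\varepsilon)(-\log_{p_\theta}k)}=k^{-(1+\varepsilon)}$, we get $\nu_{\bp}(A_k)\le k^{-(1+\varepsilon)}$, and $\sum_k k^{-(1+\varepsilon)}<\infty$. By the first Borel--Cantelli lemma, $\nu_{\bp}$-a.e.\ $\by$ lies in only finitely many $A_k$, hence $\limsup_k l_\theta(k;\by)/(-\log_{p_\theta}k)\le 1+\varepsilon$. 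Intersecting over $\varepsilon=1/N$ gives the upper bound a.s.

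\textbf{Lower bound.} Fix $\varepsilon\in(0,1)$. I would build a sequence of disjoint blocks so as to restore independence: set $b_1=2$, $m_j=\lceil (1-\varepsilon)(-\log_{p_\theta}b_j)\rceil$, and $b_{j+1}=b_j+m_j$, and let $E_j=\{\by:\ y_{b_j}=\cdots=y_{b_j+m_j-1}=\theta\}$. These events depend on pairwise disjoint coordinate blocks, so they are independent, with $\nu_{\bp}(E_j)=p_\theta^{m_j}\ge p_\theta\, b_j^{-(1-\varepsilon)}$. A short induction shows $b_j=\Theta(j\log j)$ (the increments $m_j$ are of logarithmic size), so $\nu_{\bp}(E_j)\ge c\,(j\log j)^{-(1-\varepsilon)}$ for some constant $c>0$, and since $1-\varepsilon<1$ the series $\sum_j\nu_{\bp}(E_j)$ diverges. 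By the second Borel--Cantelli lemma, $\nu_{\bp}$-a.e.\ $\by$ lies in infinitely many $E_j$; for such $\by$ one has $l_\theta(b_j;\by)\ge m_j\ge (1-\varepsilon)(-\log_{p_\theta}b_j)$ infinitely often, hence $\limsup_k l_\theta(k;\by)/(-\log_{p_\theta}k)\ge 1-\varepsilon$. Intersecting over $\varepsilon=1/N$ and combining with the upper bound finishes the proof. The main obstacle is precisely the sparse-block construction in this last step: one must choose the blocks long enough that $E_j$ witnesses a run of the desired relative length, yet short (and the $b_j$ slowly growing) enough that $\sum_j\nu_{\bp}(E_j)=\infty$ — which amounts to checking that $b_j$ grows only by a polylogarithmic factor faster than $j$, a routine estimate once the recursion is in place.
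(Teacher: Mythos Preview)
The paper does not supply its own proof of this lemma: it is quoted as ``well known'' with attribution to Billingsley \cite{Bill94}, and the subsequent Lemma~\ref{Modified_Bill_Le} is justified only by the phrase ``by the same argument as the proof of Lemma~\ref{Bill_Lemma}''. So there is nothing in the paper to compare against beyond the implicit reference to the textbook argument.

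Your proposal is exactly that standard argument --- first Borel--Cantelli for the upper bound, second Borel--Cantelli on independent disjoint blocks for the lower bound --- and it is correct. The only place one must be slightly careful is the growth estimate $b_j=O(j\log j)$ for the block endpoints; your recursion $b_{j+1}=b_j+O(\log b_j)$ does give this (an easy induction with any constant $C>(1-\varepsilon)/\log(1/p_\theta)$ works), and that is enough to make $\sum_j b_j^{-(1-\varepsilon)}$ diverge. This matches the classical treatment and is precisely what the paper has in mind when it invokes Billingsley.
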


\begin{defi}[Modified run length]\emph{ Let $\theta_0\in \Sigma$ be a fixed letter.
Define $l_{\theta_0,\theta}(k,\by)=t$ if $y_{k-1}=\theta_0$, $y_k=\cdots=y_{k+t-1}=\theta$ and
$y_{k+t}\neq \theta$; otherwise, set $l_{\theta_0,\theta}(k,\by)=0$. }
\end{defi}

By the same argument as the proof of Lemma \ref{Bill_Lemma}, one can show that

\begin{lemma}\label{Modified_Bill_Le}
Let $\Sigma=\{0,1,\dots, m-1\}$. Let $\bp=(p_0,\dots, p_{m-1})$ be a probability weight which allows
$p_j=0$. Let $\nu_{\bp}$ be the  Bernoulli measure on $\Sigma^\infty$ associated with $\bp$.
If $p_{\theta_0}\neq 0$ and  $p_\theta\neq 0$, then
$$
{\nu_{\bp}}\left(\left \{  \by\in \Sigma^\infty; ~\underset{k\rightarrow \infty}{\limsup} \dfrac{l_{\theta_0,\theta}(k;\by)}{-\log_{p_\theta} k}=1\right \}  \right)=1.
$$
\end{lemma}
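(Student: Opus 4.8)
The plan is to adapt the classical Billingsley-type argument for run lengths, as used in Lemma \ref{Bill_Lemma}, to the modified run length $l_{\theta_0,\theta}$. First I would recall that the classical statement (Lemma \ref{Bill_Lemma}) is proved by a Borel--Cantelli argument: for a fixed $c>1$, one shows $\sum_k \nu_{\bp}(l_\theta(k;\by)\ge c(-\log_{p_\theta}k))<\infty$ to get the $\limsup\le 1$ direction, and one shows divergence of an analogous sum (along a suitably sparse subsequence to ensure independence) to get the $\limsup\ge 1$ direction. The key point is that $\nu_{\bp}(l_\theta(k;\by)\ge t)=p_\theta^{t}$ when $y_{k-1}\ne\theta$ (or $k=1$), which makes the tail probabilities a geometric series.

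The modification is essentially cosmetic. For the upper bound $\limsup\le 1$, note that the event $\{l_{\theta_0,\theta}(k;\by)\ge t\}$ is contained in $\{l_\theta(k;\by)\ge t\}$ whenever $t\ge 1$, so the convergent-series half of Borel--Cantelli transfers verbatim: $\sum_k \nu_{\bp}(l_{\theta_0,\theta}(k;\by)\ge c(-\log_{p_\theta}k))\le\sum_k \nu_{\bp}(l_\theta(k;\by)\ge c(-\log_{p_\theta}k))<\infty$ for any $c>1$, hence $\limsup_k l_{\theta_0,\theta}(k;\by)/(-\log_{p_\theta}k)\le 1$ almost surely. For the lower bound, one computes exactly $\nu_{\bp}(l_{\theta_0,\theta}(k;\by)\ge t)=\nu_{\bp}(y_{k-1}=\theta_0)\cdot\nu_{\bp}(y_k=\cdots=y_{k+t-1}=\theta)=p_{\theta_0}p_\theta^{t}$, using $p_{\theta_0}\ne 0$; this is the place where the hypothesis $p_{\theta_0}\ne 0$ is needed, since otherwise the modified run length is identically zero and the claimed value $1$ fails. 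Then, as in Billingsley's proof, one picks a sparse subsequence of positions $k_1<k_2<\cdots$ (spaced by more than $c(-\log_{p_\theta}k_i)+1$, say $k_{i+1}\approx 2 k_i$) so that the events $A_i=\{l_{\theta_0,\theta}(k_i;\by)\ge (1-\epsilon)(-\log_{p_\theta}k_i)\}$ become independent; since $\sum_i\nu_{\bp}(A_i)=\sum_i p_{\theta_0}p_\theta^{(1-\epsilon)(-\log_{p_\theta}k_i)}=p_{\theta_0}\sum_i k_i^{-(1-\epsilon)}=\infty$, the second Borel--Cantelli lemma gives $\limsup_k l_{\theta_0,\theta}(k;\by)/(-\log_{p_\theta}k)\ge 1-\epsilon$ a.s., and letting $\epsilon\to 0$ through a countable sequence finishes the proof.

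I do not expect a genuine obstacle here: the lemma is explicitly flagged in the paper as provable ``by the same argument,'' and the only substantive change from the classical case is the extra factor $p_{\theta_0}$ in the tail probability, which is harmless for Borel--Cantelli purposes (it affects neither convergence nor divergence of the relevant series) provided $p_{\theta_0}>0$. The mild bookkeeping point to be careful about is ensuring the subsequence $(k_i)$ is chosen so that the windows $[k_i-1,k_i+t_i-1]$ (including the position $k_i-1$ carrying the constraint $y_{k_i-1}=\theta_0$) are pairwise disjoint, which forces independence of the $A_i$ under the product measure $\nu_{\bp}$; the geometric spacing $k_{i+1}\ge 2k_i$ comfortably dominates the logarithmic window length $c(-\log_{p_\theta}k_i)+1$ for large $i$, so this causes no trouble. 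Hence the proof is a routine transcription of the Billingsley argument, and I would present it at that level of detail, citing \cite{Bill94} for the parts that are literally unchanged.
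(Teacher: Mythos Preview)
Your approach is exactly what the paper intends: it gives no proof at all beyond the sentence ``By the same argument as the proof of Lemma \ref{Bill_Lemma}, one can show that'', so spelling out the Borel--Cantelli argument with the extra factor $p_{\theta_0}$ is precisely the right thing to do, and your upper-bound half and your probability computation $\nu_{\bp}(l_{\theta_0,\theta}(k;\by)\ge t)=p_{\theta_0}p_\theta^{t}$ are correct.

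There is, however, a genuine slip in the lower-bound half. You propose the subsequence $k_{i+1}\approx 2k_i$ and then assert $\sum_i k_i^{-(1-\epsilon)}=\infty$. With geometric spacing $k_i\approx 2^i$ this sum is $\sum_i 2^{-i(1-\epsilon)}<\infty$, so the second Borel--Cantelli lemma does \emph{not} apply. The disjointness requirement is only that consecutive windows $[k_i-1,\,k_i+t_i-1]$ be disjoint, i.e.\ $k_{i+1}-k_i> t_i+1$ with $t_i\sim c\log k_i$; geometric growth vastly over-shoots this. The standard fix is to take $k_{i+1}=k_i+\lceil c(-\log_{p_\theta}k_i)\rceil+2$ (or any spacing of order $\log k_i$), which gives $k_i\asymp i\log i$ and hence $\sum_i k_i^{-(1-\epsilon)}\asymp\sum_i (i\log i)^{-(1-\epsilon)}=\infty$. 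With that correction the argument goes through exactly as you outline.
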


Let $\mathbb P$ be the uniform Bernoulli measure on $\SD^\infty$.

\begin{lemma}\label{lem:run}
Let $E=K(n, m, \SD)$ be a BM-carpet of non-doubling type. Assume that $a_0>a_{m-1}$.  Then
$$
{\mathbb P}\left(\left \{  \bomega \in \SD^\infty; ~\underset{k\rightarrow \infty}{\limsup} \dfrac{\beta(k;\bomega)}{-\log_{p_0} k/\sigma}=1\right \}  \right)=1,
$$
where $p_0=a_0/\#\SD$.
\end{lemma}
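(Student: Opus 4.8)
The plan is to relate the reverse run length function $\beta(k;\bomega)$ to the modified run length functions $l_{\theta_0,\theta}(\cdot;\by)$ controlled in Lemma \ref{Modified_Bill_Le}, and then push the latter's almost-sure statement through the coding map. Recall from \eqref{eq:beta} that $\beta(k;\bomega) = \beta_0(k;\bomega) \vee \beta_{m-1}(k;\bomega)$, where $\beta_0(k;\bomega)$ measures, roughly, a run of $0$'s in $\by$ preceding position $\ell(k)$ that is immediately preceded by a letter in $\SE+1$, and symmetrically $\beta_{m-1}$ measures a run of $(m-1)$'s preceded by a letter in $\SE-1$. Since $a_0>a_{m-1}$, the dominant contribution to the $\limsup$ should come from $\beta_0$: a long run of $0$'s ending near position $\ell(k)$ contributes roughly $\ell(k)-k \asymp k(1/\sigma-1)$ if the run is long enough, but for a \emph{typical} $\bomega$ the run length of $0$'s near position $\ell(k)$ is of order $-\log_{p_0}\ell(k) \sim -\log_{p_0}(k/\sigma)$ (note $\log_{p_0}$ here means $\log$ base $p_0$, and $p_0 = a_0/\#\SD$ is exactly the $\mathbb P$-probability that a given letter of $\by$ equals $0$, since under $\mathbb P$ the sequence $\by$ is Bernoulli with weight $(a_0/\#\SD,\dots,a_{m-1}/\#\SD)$).

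The key steps, in order, are as follows. First I would observe that under $\mathbb P$, the marginal sequence $\by$ is a Bernoulli sequence on $\Sigma=\{0,\dots,m-1\}$ with weight $\bp = (a_0/N,\dots,a_{m-1}/N)$, so Lemma \ref{Modified_Bill_Le} applies directly to $\by$. Second, I would establish a two-sided comparison between $\beta_0(k;\bomega)$ and the modified run lengths: on one hand, $\beta_0(k;\bomega) \le \ell(k) - k_0(k;\bomega)$ where $k_0(k;\bomega)$ is (essentially) the start of the maximal run of $0$'s terminating at or before $\ell(k)$; this run has length $\ell(k)-k_0 +$ (a bounded correction) $\le \max_j l_{\cdot,0}(j;\by) \vee (\text{length of the relevant run})$, and since we only count runs preceded by a letter in $\SE+1$, such a run is captured by one of finitely many $l_{\theta_0,0}(\cdot;\by)$ with $\theta_0 \in (\SE+1)\cap\Sigma$. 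On the other hand, for the lower bound, whenever $\by$ has a run of $0$'s of length $t$ starting at position $j+1$ with $y_j\in\SE+1$, choosing $k$ so that $\ell(k)$ lands just at the end of the run (and $k < j+1$, which holds once $t$ is large since $k\le\sigma\ell(k) < \ell(k)$) gives $\beta_0(k;\bomega) \ge t - O(1)$. Since $\ell(k) \sim k/\sigma$ and $\ell$ is essentially a bijection-up-to-bounded-multiplicity between $k$ and $\ell(k)$, the normalizations $-\log_{p_0}k/\sigma$ (which I read as $-\log_{p_0}(k/\sigma) \sim -\log_{p_0}\ell(k) \sim -\log_{p_0} k$) match up to $o(1)$. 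Third, I would note that $\beta_{m-1}(k;\bomega)$, after the analogous comparison, contributes a $\limsup$ normalized by $-\log_{p_{m-1}}(\cdot)$; since $p_{m-1} = a_{m-1}/N < a_0/N = p_0 < 1$, we have $-\log_{p_{m-1}} < -\log_{p_0}$ as functions, so $\beta_{m-1}(k;\bomega)/(-\log_{p_0}k/\sigma) \to$ something $\le p_0$-based bound that is strictly less than $1$ a.s.; hence the $\vee$ in \eqref{eq:beta} is won by $\beta_0$ and the $\limsup$ equals $1$. Finally, I would intersect the two (countably many, hence full-measure) exceptional-set complements from Lemma \ref{Modified_Bill_Le} applied to the finitely many relevant pairs $(\theta_0,0)$ and $(\theta_0,m-1)$, and transport the resulting full-$\mathbb P$-measure set through $\pi$; since $\mathbb P$ is the uniform Bernoulli measure on $\SD^\infty$, the statement is about $\bomega\in\SD^\infty$ directly, so no pushforward subtlety arises.

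The main obstacle I anticipate is the bookkeeping in the comparison between $\beta$ and the modified run lengths — specifically, (a) matching the floor function $\ell(k)=\lfloor k/\sigma\rfloor$ against the run positions with only bounded error, so that the normalization $-\log_{p_0}(k/\sigma)$ can be freely traded for $-\log_{p_0}\ell(k)$ and $-\log_{p_0}k$ in the $\limsup$; (b) correctly handling the side condition "$y_{h_0(k;\bomega)}\in\SE+1$" in the definition of $\beta_0$ — a run of $0$'s that is \emph{not} preceded by a letter in $\SE+1$ contributes $0$, so the upper bound must be phrased via $\bigvee_{\theta_0\in(\SE+1)\cap\Sigma} l_{\theta_0,0}(\cdot;\by)$ rather than the plain run length $l_0$, and one must check $(\SE+1)\cap\Sigma$ is nonempty and that the relevant $p_{\theta_0}>0$ (which holds since $\theta_0\in\SE$ means $a_{\theta_0}>0$, wait — $\theta_0\in\SE+1$ so $\theta_0 - 1\in\SE$; but Lemma \ref{Modified_Bill_Le} needs $p_{\theta_0}\ne 0$, i.e. $\theta_0\in\SE$; this requires the non-doubling hypothesis $a_ja_{j+1}>0$ for some $j$, which guarantees some $\theta_0$ with $\theta_0\in\SE$ and $\theta_0 - 1\in\SE$, i.e. $\theta_0\in\SE\cap(\SE+1)$, for the lower bound to bite); and (c) the $k\vee h$ appearing in $\beta_0$ caps the contribution at $\ell(k)-k$, which is exactly why the $\limsup$ cannot exceed $s(1/\sigma-1)$-type bounds, but here we are in the generic regime where runs are only logarithmically long so the cap is never active — this needs a remark. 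None of these is deep, but the definitions must be unwound carefully.
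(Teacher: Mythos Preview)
Your approach is correct and follows essentially the same route as the paper: relate $\beta(k;\bomega)$ to run lengths of $0$'s and $(m-1)$'s in $\by$ (which under $\mathbb{P}$ is Bernoulli with weights $a_j/N$), use the modified run length Lemma~\ref{Modified_Bill_Le} for the lower bound with a $\theta_0\in\SE\cap(\SE+1)$ guaranteed by the non-doubling hypothesis, handle the normalization via $\ell(k)\sim k/\sigma$, and note that the $\beta_{m-1}$ contribution is strictly smaller since $p_{m-1}<p_0$. The one simplification in the paper is that the upper bound uses the \emph{plain} run length $l_0(j;\by)+l_{m-1}(j;\by)$ (controlled by Billingsley's Lemma~\ref{Bill_Lemma}) rather than the modified run lengths, which sidesteps the side-condition bookkeeping in your obstacle~(b): $\beta(k;\bomega)$ is trivially dominated by a run of $0$'s or $(m-1)$'s terminating at $\ell(k)$ regardless of the preceding letter, and the $k\vee h$ cap only helps.
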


\begin{proof}  Pick  $\theta_0$ such that $\theta_0, \theta_0-1\in \SE$.  Let $L\geq 1$ be an integer and set $\epsilon=1/L$.
 Denote
$$
H_{L,1}=\left[ (\bx,\by)\in\SD^\infty: l_0(k;\by)+l_{m-1}(k;\by)< (1+\epsilon) \log k/(-\log p_0) \text{ eventually} \right],
$$
$$
H_{L,2}=\left[ (\bx,\by)\in\SD^\infty:  l_{\theta_0, 0}(k;\by)\geq (1-\epsilon)\log k/(-\log p_0) \text{ infinitely often} \right].
$$
 Since $p_0>p_{m-1}$, by Lemma \ref{Bill_Lemma} we have ${\mathbb P}(H_{L,1})=1$.
 On the other hand, we have ${\mathbb P}(H_{L,2})=1$ by Lemma \ref{Modified_Bill_Le}.
 Let $H_L=H_{L,1}\cap H_{L,2}$, then ${\mathbb P}(H_L)=1$.

  Fix $\bomega=(\bx,\by)\in H_L$. For $k$ large enough,  we have
 $$
 \beta(k;\bomega)\leq  l_0(\ell(k)-\beta(k;\bomega)+1;\by)+l_{m-1}(\ell(k)-\beta(k;\bomega)+1;\by)\leq  (1+\epsilon)\log \ell(k)/(-\log p_0)
 $$
  eventually since $\bomega\in H_{L,1}$.
 It follows that
\begin{equation}\label{esti_beta1}
\underset{k\rightarrow \infty}{\limsup} \dfrac{\beta(k;\bomega)}{-\log_{p_0} k/\sigma}\leq 1+\epsilon.
\end{equation}
On the other hand, by $\bomega\in H_{L,2}$, there exists infinite many $h$ such that
$$ l_{\theta_0, 0}(h;\by)\geq (1-\epsilon)\log h/(-\log p_0).$$
Set $k= \sigma(h-1+(1-\epsilon)(-\log_{p_0} h ))$, we obtain $\beta(k;\bomega)\geq (1-\epsilon)\log h/(-\log p_0))-1$.
It follows that
\begin{equation}\label{esti_beta2}
\underset{k\rightarrow \infty}{\limsup} \dfrac{\beta(k;\bomega)}{-\log_{p_0} k/\sigma}\geq 1-\epsilon.
\end{equation}
Set $H=\bigcap_{L\geq 1} H_L$, then $H$ has full measure, and  each $\bomega\in H$,
$$\underset{k\rightarrow \infty}{\limsup} \dfrac{\beta(k;\bomega)}{-\log_{p_0} k/\sigma}= 1.$$
The lemma is proved.
\end{proof}

\subsection{Remarks on $\delta$ and $\Delta$}
 \begin{thm}\label{thm:delta} Let $E=K(n,m,\SD)$ be a BM-carpet of non-doubling type. Assume that $a_0>a_{m-1}$. Then
 for $\mu_E$-a.e. $z\in E$, ${\delta}(z; \mu_E)=0$, and the range of $\overline{\delta}(z; \mu_E)$ is $[0,  \delta_{\max}]$.
 \end{thm}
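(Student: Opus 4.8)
The plan is to deduce everything from the two formulas of Theorem~\ref{thm:formula}, specialized to the gauge $\varphi(r)=-\log r$. With this choice $\varphi(n^{-k})=k\log n$, so the constant in \eqref{eq:rank} is $s=1/\log n$ and $s(1/\sigma-1)\log(a_0/a_{m-1})=\delta_{\max}$; thus Theorems~\ref{thm:lower} and~\ref{thm:formula} apply. First I would record that the exceptional codings carry no mass: since $a_0<\#\SD$ and $a_{m-1}<\#\SD$, for each $N\ge 0$ the $\mathbb{P}$-measure of $\{(\bx,\by)\in\SD^\infty:y_j=0\text{ for all }j>N\}$ is $\prod_{j>N}(a_0/\#\SD)=0$, and similarly with $m-1$ in place of $0$; summing over $N$ gives $\mathbb{P}(\Omega_0\cup\Omega_{m-1})=0$. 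Because the second coordinate of $\pi(\bx,\by)$ is $\sum_j y_j m^{-j}$, uniqueness of base-$m$ expansions gives $\pi^{-1}(\pi(\Omega_0\cup\Omega_{m-1}))=\Omega_0\cup\Omega_{m-1}$, whence $\mu_E(\pi(\Omega_0\cup\Omega_{m-1}))=\mathbb{P}(\Omega_0\cup\Omega_{m-1})=0$; in particular $\mu_E(V_E)=0$ since $V_E\subset\pi(\Omega_0\cup\Omega_{m-1})$.

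\emph{Almost-everywhere vanishing.} For $\mu_E$-a.e.\ $z$ every coding $\bomega$ of $z$ lies outside $\Omega_0\cup\Omega_{m-1}$, so Theorem~\ref{thm:formula}(i) gives $\overline{\delta}(z;\mu_E)=\big(\limsup_k\beta(k;\bomega)/k\big)\log_n(a_0/a_{m-1})$ (this is independent of the choice of coding, since all codings of such a $z$ share the same $\by$ and $\beta$ depends only on $\by$), while Theorem~\ref{thm:lower} gives $\underline{\delta}(z;\mu_E)=\delta_{\max}\mathbf{1}_{V_E}(z)=0$. By Lemma~\ref{lem:run}, $\mathbb{P}$-a.e.\ $\bomega$ satisfies $\beta(k;\bomega)=O(\log k)$, hence $\limsup_k\beta(k;\bomega)/k=0$; putting $G=\{\bomega:\limsup_k\beta(k;\bomega)/k=0\}$ we get $\mu_E(\pi(G))\ge\mathbb{P}(G)=1$. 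Intersecting $\pi(G)$ with the full-measure set above yields $\overline{\delta}(z;\mu_E)=0$, hence $\delta(z;\mu_E)=0$, for $\mu_E$-a.e.\ $z$.

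\emph{The range.} That the range lies in $[0,\delta_{\max}]$ is immediate: $\overline{\delta}\ge 0$ always, and $\beta(k;\bomega)\le\ell(k)-k$ forces $\limsup_k\beta(k;\bomega)/k\le 1/\sigma-1$, so Theorem~\ref{thm:formula} gives $\overline{\delta}(z;\mu_E)\le\delta_{\max}$ in both cases; the value $0$ is attained by the previous step. To attain an arbitrary $v\in(0,\delta_{\max}]$, set $c=v/\log_n(a_0/a_{m-1})\in(0,1/\sigma-1]$ and $\alpha=1-c\sigma\in[\sigma,1)$. Since $E$ is of non-doubling type there is $j$ with $a_j a_{j+1}>0$; put $\theta_0=j+1$, so $\theta_0\in\SE$, $\theta_0-1\in\SE$, and $\theta_0\in\SE+1$. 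Choose integers $n_1<n_2<\cdots\to\infty$ with $n_i/n_{i+1}\to\alpha$ and $\sigma n_{i+1}<n_i$ for all $i$, and let $\bomega=(\bx,\by)$ be any admissible coding with $y_{n_i}=\theta_0$ for every $i$ and $y_j=0$ otherwise (possible since $a_{\theta_0}>0$ and $a_0>0$). Then $\bomega\notin\Omega_0\cup\Omega_{m-1}$, and for $k$ with $n_i\le\ell(k)<n_{i+1}$ one has $h_0(k;\bomega)=n_i$ and $k<\sigma n_{i+1}<n_i$, so $\beta(k;\bomega)=\beta_0(k;\bomega)=\ell(k)-n_i$ up to an $O(1)$ coming from $\beta_{m-1}$ (the letter $\theta_0$ occurs only at isolated positions). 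Since $\ell(k)=\lfloor k/\sigma\rfloor$, within block $i$ the ratio $\beta(k;\bomega)/k$ is increasing and maximized near $\ell(k)=n_{i+1}-1$, where it equals $(1-n_i/n_{i+1})/\sigma+o(1)\to(1-\alpha)/\sigma=c$ and never exceeds $c$ in the limit; hence $\limsup_k\beta(k;\bomega)/k=c$ and, by Theorem~\ref{thm:formula}(i), $\overline{\delta}(\pi(\bomega);\mu_E)=c\log_n(a_0/a_{m-1})=v$. (When $V_E\ne\emptyset$ the value $\delta_{\max}$ is also realized directly by any $z\in V_E$ via Theorem~\ref{thm:formula}(ii).)

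\emph{Expected main difficulty.} Apart from Step~3, everything is bookkeeping on top of Theorems~\ref{thm:lower}, \ref{thm:formula} and Lemma~\ref{lem:run}. The delicate point is the interleaved construction: one must track the floor $\ell(k)=\lfloor k/\sigma\rfloor$ and the transition zones between consecutive runs of $0$'s precisely enough to guarantee that no $k$ pushes $\beta(k;\bomega)/k$ above the prescribed value $c$ while still forcing the $\limsup$ to equal $c$; in particular the borderline case $\alpha=\sigma$ (i.e.\ $v=\delta_{\max}$) needs the condition $\sigma n_{i+1}<n_i$ to be maintained along a sequence with $n_i/n_{i+1}\to\sigma$.
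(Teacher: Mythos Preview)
Your argument for the almost-everywhere statement is essentially the paper's: both invoke Lemma~\ref{lem:run} to get $\beta(k;\bomega)/k\to 0$ for $\mathbb{P}$-a.e.\ $\bomega$ and then feed this into \eqref{formula_delta} and Theorem~\ref{thm:lower}. Your extra care with $\pi^{-1}(\pi(\Omega_0\cup\Omega_{m-1}))=\Omega_0\cup\Omega_{m-1}$ is correct but not strictly needed, since $\mathbb{P}(G)=1$ already forces $\mu_E(\pi(G))=1$.

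For the range, your construction is correct but genuinely different from the paper's. The paper fixes $t'=t/\log_n(a_0/a_{m-1})$, takes a single recursive scale sequence $p_{k+1}=\ell(p_k)$, and writes
\[
\bomega=\binom{i_0}{0}^{p_1}\prod_{k\ge1}\left[\binom{i}{j+1}^{\ell(p_k)-p_k-\lfloor t'p_k\rfloor}\binom{i_0}{0}^{\lfloor t'p_k\rfloor}\right],
\]
so that the run of $0$'s ending at position $\ell(p_k)$ has length exactly $\lfloor t'p_k\rfloor$; one then reads off $\beta(p_k;\bomega)=\lfloor t'p_k\rfloor$ and $\beta(k;\bomega)\le t'k$ for all $k\ge p_1$ directly, with no limiting argument. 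Your ``sparse marker'' scheme (isolated letters $\theta_0$ at positions $n_i$ with $n_i/n_{i+1}\to\alpha=1-c\sigma$) also works, but the verification is asymptotic: you must show the block-wise maxima $(1-n_i/n_{i+1})/\sigma$ converge to $c$, control the transition positions via $\sigma n_{i+1}<n_i$, and handle the boundary case $\alpha=\sigma$ by approaching from above. The paper's construction buys exact equalities at every checkpoint $p_k$ (so the $\limsup$ is immediate), at the cost of longer blocks of the auxiliary letter $j+1$; yours uses only isolated markers but pushes the bookkeeping into a limit. Either route is fine.
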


\begin{proof} By Lemma \ref{lem:run},
$$
\underset{k\rightarrow \infty}{\limsup} \dfrac{\beta(k;\bomega)}{k}=0
$$
for ${\mathbb P}$-a.e. $\bomega\in \SD^\infty$, hence ${\delta}(z; \mu_E)=0$ for $\mu_E$-a.e. $z\in E$ by \eqref{formula_delta} in Example \ref{exam-1} and Theorem \ref{thm:lower}.

Next, we prove that
 $\{\overline{\delta}(z; \mu_E);~z\in E\}=[0, \delta_{\max}].$
  Pick $t\in [0, \delta_{\max}]$, let
$t'= {t}/{\log_n(a_0/a_{m-1})}.$
Since $E$  is of non-doubling type,
 there exist $(i',j),(i,j+1)\in\SD$ and $(i_0,0)\in \SD$.

Take an integer $p_1>\sigma/(1-\sigma)$, and set
$p_{k+1}=\ell(p_k)$ for  $k\geq 1$. Then $p_k\to \infty$ as $k \to \infty$.
Set
\begin{equation}\label{eq:curve}
\bomega=
\left(\begin{array}{c}
i_0\\
0
\end{array}\right)^{p_1}
\prod_{k=1}^\infty
\left[\left(\begin{array}{c}
i\\
j+1
\end{array}\right)^{\ell(p_k)-p_k-\lfloor t'p_k \rfloor}
\left(\begin{array}{c}
i_0\\
0
\end{array}\right)^{\lfloor t'p_k \rfloor}\right],
\end{equation}
and let $z=\pi(\bomega)$.
Then for $k\geq p_1$,
$$\beta(k;\bomega)\leq t'k  \quad \text{ and } \quad
\beta(p_k;\bomega)=\lfloor t'p_k \rfloor.$$
 So by \eqref{formula_delta}, we have
\begin{equation}\label{upp_delta}
\bar{\delta}(z; \mu_E)=\left(\limsup_{k\to \infty} \dfrac{\beta(k;\bomega)}{k}\right)\log_n(a_0/a_{m-1})=t'\log_n(a_0/a_{m-1})=t.
\end{equation}
\end{proof}

 \begin{thm}\label{thm:Delta} Let $E=K(n,m,\SD)$ be a BM-carpet of non-doubling type. Assume that $a_0>a_{m-1}$. Then for $\mu_E$-a.e. $z\in E$,
\begin{equation}\label{eq_full_Mea1}
 \overline{\Delta}(z; \mu_E,\rho)=- \log_{p_0}(a_0/a_{m-1})
\end{equation}
where $p_0=a_0/\#\SD$.
\end{thm}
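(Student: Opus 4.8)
The plan is to build a single $\mathbb P$-full set $H$ in the code space $\SD^\infty$ on which the identity holds for $z=\pi(\bomega)$, and then push it forward to $E$. Recall $\Delta=\delta_\varphi$ with $\varphi(r)=\log|\log r|$; hence $\varphi(n^{-k})=\log(k\log n)$, so $\varphi(n^{-k})/\log k\to 1$ and $\varphi$ satisfies \eqref{eq:rank} with $s=\lim_k k/\log(k\log n)=+\infty\in(0,+\infty]$, so that Theorem \ref{thm:formula} is available. Also $p_0=a_0/\#\SD$ and $p_{m-1}=a_{m-1}/\#\SD$ both lie in $(0,1)$, since $a_0a_{m-1}>0$ for a non-doubling type carpet.

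Let $\mathbb P$ be the uniform Bernoulli measure on $\SD^\infty$ and let $H$ be the full-measure set furnished by Lemma \ref{lem:run}, on which $\limsup_{k\to\infty}\beta(k;\bomega)/\bigl(-\log_{p_0}(k/\sigma)\bigr)=1$. For $\bomega\in H$ one has $\bomega\notin\Omega_0\cup\Omega_{m-1}$: on $\Omega_0\cup\Omega_{m-1}$ the sequence $\beta(k;\bomega)$ is eventually either identically $0$ or equal to $\ell(k)-k$, and since $(\ell(k)-k)/\log k\to+\infty$ (because $\sigma<1$), its ratio against $-\log_{p_0}(k/\sigma)$ then tends to $0$ or to $+\infty$, never to $1$; (alternatively, $\mathbb P(\Omega_0\cup\Omega_{m-1})=0$ because $p_0,p_{m-1}<1$, so one may simply remove this null set). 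Hence Theorem \ref{thm:formula}(i), applied to the coding $\bomega$, gives
$$
\overline\Delta(\pi(\bomega);\mu_E)=\Bigl(\limsup_{k\to\infty}\frac{\beta(k;\bomega)}{\varphi(n^{-k})}\Bigr)\log\frac{a_0}{a_{m-1}} .
$$
Now $\dfrac{-\log_{p_0}(k/\sigma)}{\varphi(n^{-k})}=\dfrac{\log(k/\sigma)}{(-\log p_0)\,\varphi(n^{-k})}\to\dfrac1{-\log p_0}\in(0,\infty)$, and since $\limsup_k\beta(k;\bomega)/(-\log_{p_0}(k/\sigma))=1$ is finite, multiplying the limsup through by this convergent positive factor yields $\limsup_k\beta(k;\bomega)/\varphi(n^{-k})=1/(-\log p_0)$. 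Therefore, for every $\bomega\in H$,
$$
\overline\Delta(\pi(\bomega);\mu_E)=\frac{\log(a_0/a_{m-1})}{-\log p_0}=-\log_{p_0}\frac{a_0}{a_{m-1}} .
$$

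It remains to see this holds for $\mu_E$-a.e.\ $z$. If $z\notin\pi(H)$ then no coding of $z$ lies in $H$, i.e.\ $\pi^{-1}(E\setminus\pi(H))\subseteq\SD^\infty\setminus H$, whence $\mu_E(E\setminus\pi(H))=\mathbb P\bigl(\pi^{-1}(E\setminus\pi(H))\bigr)\le\mathbb P(\SD^\infty\setminus H)=0$; so $\overline\Delta(z;\mu_E)=-\log_{p_0}(a_0/a_{m-1})$ for $\mu_E$-a.e.\ $z\in E$. There is no serious obstacle here: the substance is already carried by Lemma \ref{lem:run} (where the modified run-length version of Billingsley's theorem is invoked) and by Theorem \ref{thm:formula}; what remains is matching the gauge $\log|\log r|$ to the $\log k$-scale of the run-length asymptotics and a routine measure push-forward. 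The one point that needs attention is that $\overline\Delta(z;\mu_E)$ must be evaluated from a coding outside $\Omega_0\cup\Omega_{m-1}$ (a point may also carry codings in $\Omega_0\cup\Omega_{m-1}$), which is precisely why $H$ is taken disjoint from $\Omega_0\cup\Omega_{m-1}$.
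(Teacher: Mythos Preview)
Your proof is correct and follows essentially the same route as the paper: invoke Lemma~\ref{lem:run} to get a $\mathbb P$-full set on which $\limsup_k \beta(k;\bomega)/(-\log_{p_0}(k/\sigma))=1$, observe that such codings lie outside $\Omega_0\cup\Omega_{m-1}$, apply Theorem~\ref{thm:formula}(i) (equivalently \eqref{formula_Delta}), and match $-\log_{p_0}(k/\sigma)$ to $\varphi(n^{-k})$ to extract the constant $-\log_{p_0}(a_0/a_{m-1})$. The paper's own proof is a two-line sketch citing Lemma~\ref{lem:run} and \eqref{formula_Delta}; you have simply filled in the details it omits, including the verification that $H\cap(\Omega_0\cup\Omega_{m-1})=\emptyset$ and the push-forward from $\mathbb P$ to $\mu_E$.
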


\begin{proof} Clearly $\mu_E(\{z\in E; z=\pi(\bomega) ~~~\text{with} ~~\bomega\in\Omega_0\cup \Omega_{m-1} \})=0$.
 So the theorem  is a direct consequence of Lemma \ref{lem:run} and \eqref{formula_Delta}.
\end{proof}

\section{\textbf{A   point-wise doubling index $\gamma$}}\label{mainresult2}

In this section, we define an alternative  point-wise doubling index.

Let $\mu$ be a measure on a metric space $X$. Fix $0<\rho<1$. For $z\in X$, the \emph{upper doubling index} of $\mu$ at $z$ is defined by
 $$
 \overline{\gamma}(z; \mu,\rho)=\limsup_{r\rightarrow0}\sup_{\substack {B_{\rho r}(z')\subset B_{r}(z)}}\frac{\log \mu(B_{\rho r}(z'))}{\log \mu(B_{r}(z))};
 $$
 similarly, we define  $\underline{\gamma}(z; \mu,\rho)$.
 If the two values agree,  we denote it by
  $\gamma(z; \mu,\rho)$.

\medskip

Let   $E=K(n, m, \SD)$ be a BM-carpet,
and  $\mu$ be the uniform Bernoulli measure on $E$.
Denote $N=\#\SD$, and denote
$$\gamma_{\max}=\dfrac{\log N-(1-\sigma)\log a_{m-1}}{\log N-(1-\sigma)\log a_{0}}.$$

\begin{thm}\label{thm:gamma1} Let $E=K(n, m, \SD)$ be a BM-carpet of non-doubling type. Assume that $a_0>a_{m-1}$.
Let $0<\rho<n^{-3}$. Then $\overline{\gamma}(z; \mu,\rho)\in [1, \gamma_{\max}]$ and $\gamma_{\max}$
is attainable.
\end{thm}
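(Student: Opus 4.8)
The plan is to run the machinery of Section~\ref{mainresult1}, transferred from ratios of measures to ratios of their logarithms. Throughout write $k=k(r)$ and set $a^\ast=\max_{0\le j\le m-1}a_j$. The lower bound $\overline\gamma(z;\mu,\rho)\ge 1$ is immediate: for $r$ small one has $0<\mu(B_{\rho r}(z'))\le\mu(B_r(z))<1$ whenever $z'\in E$ and $B_{\rho r}(z')\subset B_r(z)$, so (both logarithms being negative) the quotient $\log\mu(B_{\rho r}(z'))/\log\mu(B_r(z))\ge 1$; taking $z'=z$ shows the supremum, hence the $\limsup$, is $\ge 1$. For the upper bound I would first record two comparisons with approximate squares, exactly as in the proof of Lemma~\ref{lem:pull-back}: $\mu(B_r(z))\le 4\max\{\mu(Q):Q\in\Xi_k(z)\}$ since $B_r(z)\cap E$ is covered by the (at most four) approximate squares in $\Xi_k(z)$; and, if $z'\in E$ lies in $Q'\in\Xi_k(z)$ with $B_{\rho r}(z')\subset B_r(z)$, then $B_{\rho r}(z')$ contains a rank-$(k+O(1))$ approximate square through $z'$, nested in $Q'$, so iterating Lemma~\ref{lem_esti} gives $\mu(B_{\rho r}(z'))\ge C^{-1}\mu(Q')$ with $C=C(\rho,n,m,N)$. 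Writing $Q^\ast$ for the approximate square of maximal $\mu$-measure in $\Xi_k(z)$, these yield
\[
\frac{\log\mu(B_{\rho r}(z'))}{\log\mu(B_r(z))}\ \le\ \frac{-\log\mu(Q')+O(1)}{-\log\mu(Q^\ast)-O(1)},
\]
with the $O(1)$ uniform in $z,z'$. Since $-\log\mu(Q^\ast)\ge k\log N\to\infty$, it suffices to prove that $\limsup_{k\to\infty}\bigl(-\log\mu(Q')\bigr)\big/\bigl(-\log\mu(Q^\ast)\bigr)\le\gamma_{\max}$ as $Q',Q^\ast$ range over $\Xi_k(z)$.

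To do this I would invoke the structure of $\Xi_k(z)$: its members lie in one row, or in two adjacent rows, of rank $k$. Two squares in the same row have equal fiber words of length $\ell(k)$, hence equal measure by Lemma~\ref{lem:offspring}, so the quotient is $1$. If $Q'$ and $Q^\ast$ lie in adjacent rows, their rank-$\ell(k)$ fiber words are lexicographic neighbours; unless the smaller of the two ends in a maximal block of $(m-1)$'s they differ only in the last letter and $|\log\mu(Q')-\log\mu(Q^\ast)|\le\log a^\ast=O(1)$, so the quotient tends to $1$. In the remaining case the two words are $w_1\cdots w_{h-1}\,w_h\,(m-1)^t$ and $w_1\cdots w_{h-1}\,(w_h{+}1)\,0^t$ with $w_h<m-1$, $h=\ell(k)-t$; since $a_0>a_{m-1}$ the square with the $0^t$-tail has the larger measure, so it must be $Q^\ast$ and the one with the $(m-1)^t$-tail is $Q'$ (otherwise maximality of $Q^\ast$ forces $t=O(1)$ and again the quotient $\to1$). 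Lemma~\ref{lem:offspring} then gives, for $t\in\{0,\dots,\ell(k)-k\}$,
\[
-\log\mu(Q')=-\log\mu(Q^\ast)+t\log(a_0/a_{m-1})+O(1),\qquad -\log\mu(Q^\ast)\ \ge\ \ell(k)\log N-(\ell(k)-k-t)\log a^\ast-t\log a_0,
\]
so the quotient is at most $1+\dfrac{t\log(a_0/a_{m-1})+O(1)}{\ell(k)\log N-(\ell(k)-k-t)\log a^\ast-t\log a_0}$. As a function of $t$ this has the form $1+ct/(A+bt)$ with $A>0$, $b=\log(a^\ast/a_0)\ge 0$, $c=\log(a_0/a_{m-1})>0$, hence is increasing in $t$; its value at $t=\ell(k)-k$ is $1+\dfrac{(\ell(k)-k)\log(a_0/a_{m-1})+O(1)}{\ell(k)\log N-(\ell(k)-k)\log a_0}$ — the $\log a^\ast$ cancels — which tends to $\gamma_{\max}$ because $(\ell(k)-k)/\ell(k)\to 1-\sigma$. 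This proves $\overline\gamma(z;\mu,\rho)\le\gamma_{\max}$ for every $z\in E$.

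For attainability I would exhibit one point realizing this extreme carry along a subsequence of scales. Choose $j^\ast$ with $a_{j^\ast}a_{j^\ast+1}>0$ (possible since $E$ is of non-doubling type) and set $z=\pi(\bx,\by)$ with $\by=j^\ast(m-1)^\infty$ and $\bx$ any admissible word. For every large $k$ the fiber word $\by|_{\ell(k)}=j^\ast(m-1)^{\ell(k)-1}$ ends in a maximal $(m-1)$-block, its lexicographic successor is $(j^\ast{+}1)0^{\ell(k)-1}$, and $z$ lies exactly on the common boundary of $Q_k(\bx,\by)$ (measure $a_{m-1}^{\ell(k)-k}/N^{\ell(k)}$) and the approximate square directly above it (measure $a_0^{\ell(k)-k}/N^{\ell(k)}$). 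Taking $r_k=n^{-k-1}$, so $k(r_k)=k$ and $\Xi_k(z)$ consists only of these two squares and their horizontal copies of equal measure, one gets $\mu(B_{r_k}(z))\asymp a_0^{\ell(k)-k}/N^{\ell(k)}$; and choosing $z'_k\in E\cap Q_k(\bx,\by)$ with $\dist(z'_k,z)$ between $\rho r_k$ and $(1-\rho)r_k$ (possible because $Q_k(\bx,\by)$ has height $\asymp n^{-k}\gg r_k$ and $\rho<n^{-3}<1/2$) forces $B_{\rho r_k}(z'_k)\subset Q_k(\bx,\by)$ to meet only the $(m-1)$-tail region, so $\mu(B_{\rho r_k}(z'_k))\asymp a_{m-1}^{\ell(k)-k}/N^{\ell(k)}$. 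Hence $\log\mu(B_{\rho r_k}(z'_k))/\log\mu(B_{r_k}(z))\to\gamma_{\max}$, and combined with the upper bound this gives $\overline\gamma(z;\mu,\rho)=\gamma_{\max}$.

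The step I expect to be the main obstacle is the adjacent-row bookkeeping in the upper bound: one must verify that, among all ways two neighbouring approximate squares can differ, the largest quotient of $(-\log)$-measures comes from a full carry, and that the quantities $a^\ast$ and $\min_j a_j$ appearing in the crude estimates drop out, leaving exactly $a_0$ and $a_{m-1}$ — this is what ties the bound to the constant $\gamma_{\max}$. Beyond that the argument parallels Lemma~\ref{lem:upperbound} and Corollary~\ref{cor:upper}, with the role of $\beta(k;\bomega)\log(a_0/a_{m-1})$ now played by these ratios of logarithms.
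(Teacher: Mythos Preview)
Your upper bound argument is essentially the paper's: you reduce to the ratio $-\log\mu(Q')/(-\log\mu(Q^\ast))$ over $\Xi_k(z)$ via Lemmas~\ref{lem:pull-back} and~\ref{lem_esti}, analyse the adjacent-row carry structure, and optimise over the carry length $t$. The monotonicity in $t$ and the cancellation of $a^\ast$ at $t=\ell(k)-k$ are handled correctly, and this part would go through.

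The attainability argument has a genuine gap. Your point $z=\pi(\bx,\by)$ with $\by=j^\ast(m-1)^\infty$ has $\bomega\in\Omega_{m-1}$, and for such points the paper's own Remark after Theorem~\ref{thm:gamma1} (and the argument of Lemma~\ref{lem:upper-bound}, Case~1) shows that $\overline\gamma(z;\mu,\rho)=1$ \emph{unless} $z\in V_E$. Your claim that ``$z$ lies exactly on the common boundary of $Q_k(\bx,\by)$ and the approximate square directly above it'' presupposes that an approximate square in the row $(j^\ast{+}1)0^{\ell(k)-1}$ lies within $r_k$ of $z$ horizontally; this is exactly the double-vertical-coding condition, and it can fail. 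Concretely, take $n=4$, $m=2$, $\SD=\{(0,0),(1,0),(3,1)\}$: here $a_0=2>a_1=1$ and $a_0a_1>0$, so $E$ is of non-doubling type, yet $E$ satisfies the VSC. The only choice is $j^\ast=0$, forcing $x_1\in\{0,1\}$ and $x_i=3$ for $i\ge 2$, so the first coordinate of $z$ lies in $\{1/4,1/2\}$; every point of $E$ in the upper row $(j^\ast{+}1)0^{\ell(k)-1}=10^{\ell(k)-1}$ has first coordinate in $[3/4,1]$. Hence $\Xi_k(z)$ never contains an upper-row square, $\mu(B_{r_k}(z))\asymp a_{m-1}^{\ell(k)-k}/N^{\ell(k)}$ rather than $a_0^{\ell(k)-k}/N^{\ell(k)}$, and your quotient stays bounded.

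The paper sidesteps this by choosing $\bomega\notin\Omega_0\cup\Omega_{m-1}$: it takes the sequence from~\eqref{eq:curve} (slightly adjusted so that $\beta(k;\bomega)=\ell(k)-k-1$ infinitely often) and then invokes the second assertion of Lemma~\ref{lem:upperbound} via~\eqref{esti_gamma2}. The point of that lemma's proof is that, when $\by$ reads $\cdots(j{+}1)0^{\beta}$ with $j\in\SE$, one can \emph{manufacture} the adjacent small-measure square by replacing the tail of $\bomega$ from position $h$ onward by $(i',j)(i'',m{-}1)^\infty$, keeping the first $h-1\ge k$ horizontal digits fixed so that the new point is automatically within $n^{-k+1}$ of $z$. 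This guarantees the two-row configuration regardless of whether $V_E$ is empty, which is what your construction cannot do.
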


\begin{proof}
Let $z\in E$ and let $\bomega$ be a coding of $z$.
Pick $0<r<1$.
Let $k=k(r)$ be the integer such that
$\dfrac{1}{n^{k(r)+2}}< {r} \leq\dfrac{1}{n^{k(r)+1}}$
and let
\begin{equation}\label{def_Xi}
\Xi_k(z):=\{Q_k(\bu,\bv);~ ~Q_k(\bu,\bv)\cap B_r(z)\neq \emptyset\}
\end{equation}
as in Section \ref{mainresult1}.

Let $z'\in E$ and $B_{\rho r}(z')\subset B_r(z)$. By the first assertion of Lemma \ref{lem:upperbound}, we have
\begin{equation}\label{esti_gamma1}
 \dfrac{\log \mu(B_{\rho r}(z'))}{\log \mu(B_r(z))}
   \leq \dfrac{-\log U(z;r,\rho)}{\log \mu(B_r(z))} +1 \leq \dfrac{-\log C_2-\beta(k;\bomega)\log(a_0/a_{m-1})  }{ \log (4\cdot \max \{\mu(Q);~Q\in \Xi_k(z)\})}+1
\end{equation}
where the last inequality holds since $\Xi_k(z)$ is a cover of $B_r(z)$.

Let $Q_k(\bu,\bv)$ be an approximate square located in a row adjacent to the row of $Q_k(\bomega)$.
By analysing the three cases that $\bv|_{\ell(k)}$ equals to $\by|_{\ell(k)-1}*v_{\ell(k)}$,
  $ \by|_{h-1}*v_{h}*0^{\ell(k)-h}$, or $ \by|_{h-1}*v_{h}*(m-1)^{\ell(k)-h}$,
we have
$$
  \mu(Q_k(\bu,\bv)) \leq \dfrac{n(a^{*})^{\ell(k)-k-\beta(k;\bomega)}a_0^{\beta(k;\bomega)}}{N^{\ell(k)}},
$$
where $a^{*}=\underset{j\in \SE}{\max}~a_j$. It follows that
\begin{equation}
   \dfrac{\log \mu(B_{\rho r}(z'))}{\log \mu(B_r(z))}\leq
 \dfrac{-\log C_2+\log\left( \dfrac{a_{m-1}}{a_0}\right)^{\beta(k;\bomega)}}{\log (4n)+\log \dfrac{(a^{*})^{\ell(k)-k-\beta(k;\bomega)}a_0^{\beta(k;\bomega)}}{N^{\ell(k)}}}+1.
\end{equation}
It is easy to show that after removing the unimportant constants $-\log C_2$ and $\log (4n)$,
the right hand side of the above formula attains maximum when $\beta(k;\bomega)=\ell(k)-k$.
Hence
\begin{equation}
\overline{\gamma}(z; \mu,\rho)  \leq
     \limsup_{k\to \infty}  \left[ \log\left( \frac{a_{m-1}}{a_0}\right)^{\ell(k)-k} \left ( \log \dfrac{ a_0^{\ell(k)-k}}{N^{\ell(k)}}
     \right )^{-1}+1\right]=\gamma_{\max}.
\end{equation}

By the second assertion of  Lemma \ref{lem:upperbound}, if $\beta(k;\bomega)\leq \ell(k)-k-1$, we have
\begin{equation}\label{esti_gamma2}
\begin{array}{rl}
  \underset{z'\in E, B_{\rho n^6 r}(z')\subset B_{n^6r}(z)}{\sup}\dfrac{\log \mu(B_{\rho  n^6 r}(z'))}{\log \mu(B_{n^6r}(z))}
  &=  \dfrac{-\log U(z;n^6r,\rho)}{\log \mu(B_{n^6r}(z))}+1\\
  &\geq \dfrac{\log C_2-\beta(k;\bomega)\log(a_0/a_{m-1})}{\log (\max \{\mu(Q);~Q\in \Xi_k(z)\})}+1
 \end{array}
\end{equation}
where the last inequality holds  since $B_{n^3r}(z)$ covers every element of $\Xi_k(z)$.

Let $\bomega$ be a point in $\SD^\infty$ such that
$\beta(k; \bomega)=\ell(k)-k-1$ for infinitely many $k$.
For example, the $\bomega$ in \eqref{eq:curve} will do with a slight adjustment for $t'=1/\sigma-1$.
Let $z=\pi(\bomega)$. By  \eqref{esti_gamma2} we have
\begin{equation}
\overline{\gamma}(z; \mu,\rho)\geq
\underset{j\rightarrow\infty}{\lim}\left(\dfrac{\log C_2+\log\left( \frac{a_{m-1}}{a_0}\right)^{\ell(k_j)-k_j-1}}{\log (a^*)+\log \frac{a_0^{\ell(k_j)-k_j-1}}{N^{\ell(k_j)}}}+1\right)
  =\gamma_{\max}.
\end{equation}
The theorem is proved.
\end{proof}

\begin{remark} \emph{Under the assumption of Theorem \ref{thm:gamma1}, one can show that }

 \emph{(i) If $\bomega\in \Omega_0\cup \Omega_{m-1}$, then $
{\gamma}(z; \mu,\rho)=\max\{
\gamma_{\max}\cdot {\mathbf 1}_{V_E}(z), ~1\}.$}

\emph{(ii)  $\underline{\gamma}(z; \mu,\rho)= \max\{
\gamma_{\max}\cdot {\mathbf 1}_{V_E}(z), ~1\}.$}

\emph{(iii)   For $\mu$-a.e. $z\in E$, $\overline{\gamma}(z; \mu,\rho)=1$.}
\end{remark}

\section{\textbf{Proofs of Theorems \ref{thm_5}, \ref{thm:fiber} and \ref{thm:best}}}\label{mainresult3}

In this section, we always assume that $E=K(n,m, \SD), F=K(n,m,\SD')$ are two BM-carpets
and $f:E\rightarrow F$ is a bi-Lipschitz map with Lipschitz constant $c_0$.
We use $(a_j)_{j=0}^{m-1}$ and $(a'_j)_{j=0}^{m-1}$ to denote the fiber sequences
of $\SD$ and $\SD'$, respectively.
Denote $N=\#\SD$ and $N'=\#\SD'$.
Denote $s=\#\{j; ~a_j>0\}$ and $s'=\#\{j; ~a'_j>0\}$.
Let
\begin{equation}\label{defi_a*}
a_1^*>  a_2^*> \cdots>  a_{\tilde p}^*
\end{equation}
be  the distinct non-zero terms of   $(a_j)_{j=0}^{m-1}$, and let
 $M_i$  be  the occurrence of $a_i^*$, let
\begin{equation}\label{defi_b*}
b_1^* > b_2^*>\cdots>  b_{\tilde q}^*
\end{equation}
be the  distinct non-zero terms of     $(a'_j)_{j=0}^{m-1}$, and let
  $M'_i$  be the occurrence of $b_i^*$.

Denote by $\mu_E$ and $\mu_F$ the uniform Bernoulli measures of $E$ and $F$ respectively.
 Huang \emph{et al.}\cite{HRWX22} obtained the following Theorem, see also Rao \emph{et al.} \cite{Rao2019}.
\begin{thm}[\cite{HRWX22,Rao2019}]\label{thm:equivalent}
Let $E$ and $F$ be two BM-carpets.
If $f:~E\to F$ is a bi-Lipschitz map, then $\mu_F\circ f$ is equivalent to
$\mu_E$, namely, there exists $\zeta>0$ such that
\begin{equation}\label{eq:AC}
\zeta^{-1} \mu_E(A) \leq      \mu_F(f(A))\leq \zeta \mu_E(A)
\end{equation}
for any Borel set $A\subset E$.
 Consequently, $\mu_E$ and $\mu_F$ have the same multifractal spectrum.
\end{thm}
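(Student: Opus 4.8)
The plan is to reformulate the claim in terms of the set function $\nu(A):=\mu_F(f(A))$, which is a Borel probability measure on $E$ because $f$ is a homeomorphism of $E$ onto $F$, and to prove the two-sided bound $\zeta^{-1}\mu_E\le\nu\le\zeta\mu_E$. Applying this to $f^{-1}$ in place of $f$ interchanges the two inequalities, so it is enough to prove one of them, say $\mu_F(f(A))\le\zeta\,\mu_E(A)$ for every Borel $A\subset E$, with $\zeta$ depending only on $n$, $m$, $\SD$, $\SD'$ and the bi-Lipschitz constant $c_0$. By a routine approximation argument (the approximate squares of $E$ of all ranks generate the Borel $\sigma$-algebra and, for fixed rank, tile $E$ up to a set that is null for both $\mu_E$ and $\nu$), this reduces to the single uniform estimate
\begin{equation*}
\mu_F\bigl(f(Q)\bigr)\le\zeta\,\mu_E(Q)\qquad\text{for every approximate square }Q\text{ of }E .
\end{equation*}

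To attack this I would use the elementary identity that the measure of an approximate square is a normalized cylinder count: Lemma \ref{lem:offspring} gives $\mu_E\bigl(Q_k(\bomega)\bigr)=N^{-\ell(k)}\cdot\#\mathcal{C}_E\bigl(Q_k(\bomega)\bigr)$, where $\mathcal{C}_E\bigl(Q_k(\bomega)\bigr)$ is the family of cylinders of $E$ of rank $\ell(k)$ contained in $Q_k(\bomega)$ (filling in the horizontal digits $x_{k+1},\dots,x_{\ell(k)}$ admissibly produces exactly $\prod_{j=k+1}^{\ell(k)}a_{y_j}$ of them), and similarly in $F$; so, after dividing out the cylinder normalizations, the target estimate says that $f$ cannot inflate cylinder counts by more than a bounded factor. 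The naive way to exploit this --- cover $f(Q)$ by cylinders of $F$ of a well-chosen rank and count them, using that $f$ distorts distances by at most $c_0$ --- is, however, too crude: a rank-$\ell$ cylinder is a thin rectangle of dimensions $n^{-\ell}\times m^{-\ell}$, so $f(Q)$, although of diameter comparable to $n^{-k}$, may fail to fill up the cylinders of $F$ that it meets, and the resulting bound loses exactly the factors one needs to keep. This is where the approximate-square geometry (pieces of diameter comparable to $n^{-k}$ in both directions) must replace the elongated cylinders, together with the bounded offspring ratios of Lemma \ref{lem_esti} (and its easy upper counterpart) to absorb bounded shifts of rank.

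The hard part will be exactly this comparison, and it is genuinely delicate because $\mu_F$ is not doubling --- the phenomenon this paper is about. Bouncing back and forth (cover $f(Q)$ by approximate squares of $F$, then cover their $f^{-1}$-preimages by approximate squares of $E$) does not close, since in a non-doubling carpet the $\mu_F$-measure of an approximate square can exceed that of an adjacent one by an unbounded factor. The way around it, carried out in \cite{Rao2019,HRWX22}, is a structural statement: there are integers $\tau=\tau(n,m,c_0)$ and $C=C(n,m,c_0)$ such that $f$ maps every rank-$k$ approximate square of $E$ into a union of at most $C$ rank-$(k-\tau)$ approximate squares of $F$ whose total $\mu_F$-measure is at most $\zeta\,\mu_E(Q)$, proved by an inductive descent on $k$ that follows the vertical digit strings of the two codings of a point and uses the run-length structure of the carpet --- essentially the combinatorics behind Theorems \ref{thm:lower} and \ref{thm:formula}. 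Once $\zeta^{-1}\mu_E\le\nu\le\zeta\mu_E$ is available, the equality of multifractal spectra will follow immediately: a bounded Radon--Nikodym derivative forces $\underline{\dim}\,\mu_E(z)=\underline{\dim}\,\nu(z)$ at every $z$, so $\mu_E$ and $\nu$ have the same local-dimension level sets, and $B_{r/c_0}(f(z))\subset f(B_r(z))\subset B_{c_0 r}(f(z))$ identifies those level sets with the ones of $\mu_F$ via $f$, which preserves Hausdorff dimension.
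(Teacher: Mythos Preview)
The paper does not prove this theorem at all: it is stated as a quoted result from \cite{HRWX22,Rao2019} (``Huang \emph{et al.}~\cite{HRWX22} obtained the following Theorem, see also Rao \emph{et al.}~\cite{Rao2019}'') and used as a black box in the proofs of Theorems~\ref{thm_5} and~\ref{thm:fiber}. There is therefore no in-paper proof to compare your proposal against.

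Your sketch is a reasonable high-level outline of how the argument in the cited references proceeds, and you correctly diagnose the essential difficulty (non-doubling of $\mu_F$ prevents the naive covering-and-counting argument from closing) and correctly locate the missing technical ingredient in \cite{Rao2019,HRWX22}. That said, your write-up is explicitly not self-contained: the ``structural statement'' you invoke is asserted rather than proved, so as a standalone proof this would be a gap. If the intent is merely to indicate why the result is plausible and where to find the details, your account is accurate; if the intent is to supply an independent proof, the inductive descent you allude to would need to be written out in full.
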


 By the above result, Rao \textit{et al.} \cite{Rao2019} characterized when $\mu_E$ and $\mu_F$ have the same multifractal spectrum, see also Banaji \emph{et al.} \cite{B}.

\begin{thm}[\cite{Rao2019,B}]\label{thm:invariants}
Let $E=K(n,m,\SD)$ and $F=K(n,m,\SD')$ be two BM-carpets.
Then $\mu_E$ and $\mu_F$ have the same multifractal spectrum if and only if
\begin{equation}\label{eq:invariants}
\tilde p=\tilde q \ \  \text{ and } \ \
\frac{a_i^*}{b_i^*}=\left (\frac{M_i'}{M_i}\right )^{1/\sigma}=\left (\frac{s'}{s}\right )^{1/\sigma}=\left(\frac{N}{N'}\right)^{1/(1-\sigma)},  ~~\text{for }~~ i=1,\dots, \tilde p.
\end{equation}
\end{thm}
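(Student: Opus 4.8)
The plan is to reduce the equality of multifractal spectra to an explicit computation of the $L^q$-spectrum of $\mu_E$, followed by a rigidity statement for exponential sums. First I would compute the $L^q$-spectrum $\tau_{\mu_E}$ directly from approximate squares. By Lemma~\ref{lem:offspring}, a rank-$k$ approximate square $Q_k(\bx,\by)$ has $\mu_E$-measure $N^{-\ell(k)}\prod_{j=k+1}^{\ell(k)}a_{y_j}$ and diameter $\asymp n^{-k}$, and for each fixed word $\by|_{\ell(k)}$ there are exactly $\prod_{j=1}^{k}a_{y_j}$ such squares. Hence the partition function factorises as
\[
\sum_{Q}\mu_E(Q)^q=N^{-q\ell(k)}\Big(\sum_{j}a_j\Big)^{k}\Big(\sum_{j:a_j>0}a_j^{\,q}\Big)^{\ell(k)-k}=N^{\,k-q\ell(k)}\,P(q)^{\ell(k)-k},
\]
where $P(q)=\sum_{j:a_j>0}a_j^{\,q}=\sum_{i=1}^{\tilde p}M_i(a_i^*)^{q}$ is the fiber partition function. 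Using $\ell(k)/k\to 1/\sigma$ and $\log(\mathrm{diam})\sim -k\log n$, this yields the closed form $\tau_{\mu_E}(q)=q\log_m N-\log_n N-c\,\log P(q)$ with $c=\tfrac1{\log m}-\tfrac1{\log n}>0$; since $\log P$ is a log-sum-exp of $q\mapsto M_i(a_i^*)^q$, the spectrum is strictly concave and real-analytic.

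Next I would pass from the dimension spectrum to $\tau$. For these uniform self-affine measures the dimension spectrum $f_{\mu_E}$ and $\tau_{\mu_E}$ determine one another through Legendre duality, so having the same multifractal spectrum is equivalent to the identity of functions $\tau_{\mu_E}\equiv\tau_{\mu_F}$. Writing out this identity and cancelling the affine parts reduces it to
\[
\log\frac{P_E(q)}{P_F(q)}=q\log\lambda+\log\kappa\qquad(q\in\mathbb{R}),
\]
where a short computation of the constants using $1/(1-\sigma)=\log n/(\log n-\log m)$ gives $\lambda=(N/N')^{1/(1-\sigma)}$ and $\kappa=(N'/N)^{\sigma/(1-\sigma)}$. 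Equivalently, $\sum_{i=1}^{\tilde p}M_i(a_i^*)^q=\kappa\sum_{i=1}^{\tilde q}M_i'(\lambda b_i^*)^q$ for all $q$.

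Finally I would invoke linear independence of the functions $q\mapsto x^q=e^{q\log x}$ over distinct $x>0$. As the $a_i^*$ are distinct, the $\lambda b_i^*$ are distinct, and all coefficients are positive, these two exponential sums can coincide for every $q$ only if they share the same bases with the same coefficients; since both base-sequences are strictly decreasing and $\lambda>0$ preserves order, the matching is term-by-term, forcing $\tilde p=\tilde q$, $a_i^*=\lambda b_i^*$ and $M_i=\kappa M_i'$. Summing the last relation over $i$ gives $s=\kappa s'$. Substituting the values of $\lambda$ and $\kappa$ then produces exactly the chain $a_i^*/b_i^*=(M_i'/M_i)^{1/\sigma}=(s'/s)^{1/\sigma}=(N/N')^{1/(1-\sigma)}$ of \eqref{eq:invariants}, and reversing every step gives the converse. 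I expect the genuine obstacle to be the middle step: unlike self-similar measures, self-affine Bedford-McMullen measures may violate the naive multifractal formalism, so one must rely on the explicit spectrum formulas and Legendre-invertibility established for this class in \cite{Rao2019,B} to guarantee that the dimension spectrum recovers $\tau$ itself rather than merely a concave envelope of it.
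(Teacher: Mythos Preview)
The paper does not supply its own proof of this theorem: it is quoted from \cite{Rao2019,B} and used as a black box in the subsequent proofs of Theorems~\ref{thm_5}, \ref{thm:fiber} and \ref{thm:best}. There is therefore no in-paper argument to compare your proposal against.

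That said, your outline is essentially the argument carried out in those references. The partition-function computation over rank-$k$ approximate squares is correct and factorises as you wrote, giving a closed form for $\tau_{\mu_E}$ in terms of the fiber partition function $P(q)=\sum_i M_i(a_i^*)^q$; equating $\tau_{\mu_E}\equiv\tau_{\mu_F}$ and invoking linear independence of the exponentials $q\mapsto x^q$ over distinct bases then forces the term-by-term matching in \eqref{eq:invariants}. You correctly isolate the one genuine subtlety, namely that the passage from the dimension spectrum to $\tau$ requires the explicit multifractal formalism for Bedford--McMullen carpets rather than a generic Legendre-duality argument. One small caveat: your claim that $\tau$ is strictly concave fails when $\tilde p=1$, since then $\log P(q)$ is affine and the spectrum degenerates to a single point; in that boundary case the equivalence has to be read off directly from the common Hausdorff and box dimensions rather than from Legendre inversion.
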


\begin{proof}[\textbf{Proof of Theorem \ref{thm_5}}]
  Without loss of generality, we assume that $a_0>a_{m-1}>0$ and
 $a'_0>a'_{m-1}>0$.
Let $\zeta$ be the constant in Theorem \ref{thm:equivalent} with respect to $E$ and $F$.
 Let $f:E\to F$ be a bi-Lipschitz map with a Lipschitz constant $c_0$.

Firstly, we prove  \eqref{eq_MainThm3}.
Pick any $z\in E$. Let $0<r<1$.
 Then
$$f(B_{c_0^{-1}r}(z))\subset B_{r}(f(z))\subset f(B_{c_0r}(z)).$$
By Theorem \ref{thm:equivalent}, we have
\begin{equation}\label{sec4_1}
\zeta^{-1}\mu_{E}(B_{c_0^{-1}r}(z))\leq \mu_{F}(B_{r}(f(z)))\leq \zeta\mu_{E}(B_{c_0r}(z)).
\end{equation}
Let $0<\rho<n^{-3}c_0^{-2}$.  Pick any $\omega'\in F$ satisfying $B_{\rho r}(\omega')\subset B_{r}(f(z))$. Similar as \eqref{sec4_1}, we have
\begin{equation}\label{sec4_2}
\zeta^{-1}\mu_{E}(B_{c_0^{-1}\rho r}(f^{-1}(\omega')))\leq \mu_{F}(B_{\rho r}(\omega'))\leq \zeta\mu_{E}(B_{c_0\rho r}(f^{-1}(\omega'))).
\end{equation}
By \eqref{sec4_1} and \eqref{sec4_2}, we have
$$
\log\left(\frac{\mu_{F}(B_r(f(z)))}{\mu_{F}(B_{\rho r}(\omega'))}\right)\leq\log\zeta^2+\log \left(\frac{\mu_E(B_{c_0r}(z))}{\mu_E(B_{c_0^{-1}\rho r}(f^{-1}(\omega')))}\right).
$$
Since $B_{\rho r}(\omega')\subset B_{r}(f(z))$, we have $B_{c_0^{-1}\rho r}(f^{-1}(\omega'))\subset B_{c_0r}(z)$, so
$$
\overline{\delta}_\varphi(f(z);\mu_F,\rho) \leq  \overline{\delta}_\varphi(z;\mu_E,c_0^{-2}\rho);
$$
 by symmetry,  we have
 $\overline{\delta}_\varphi(z;\mu_E,\rho c_0^2)\leq \overline{\delta}_\varphi(f(z);\mu_F,\rho).$
Finally, since $\overline{\delta}_\varphi$
is irrelevant with $\rho$, we obtain
$$\overline{\delta}_\varphi(z;\mu_E )= \overline{\delta}_\varphi(f(z);\mu_F).$$
Clearly,  the above  equality also holds if we replace $\overline{\delta}_\varphi$
by $\underline{\delta}_\varphi$. \eqref{eq_MainThm3} is proved.

Let $\delta=\delta_{-\log r}$, then $\underline\delta(z;\mu_E)=\delta_{\max}\cdot {\mathbf 1}_{V_E}(z)$. Hence, $z\in V_E$ if and  only if  $\underline\delta(z;\mu_E)>0$,
if and only if  $\underline\delta(f(z);\mu_F)>0$,
if and only if $f(z)\in V_F$.
 This proves $V_F=f(V_E)$. In particular,  $V_E=\emptyset$ if and only if
$V_F=\emptyset$, that is, $E$ satisfies the VSC if and only if $F$ does.
\end{proof}

\begin{remark}\label{rema_gamma}\emph{
Under the assumption of Theorem \ref{thm_5}, we also have
\begin{equation}\label{equi_Delta}
\overline \gamma(z; \mu_E)= \overline \gamma(f(z); \mu_F).
\end{equation}
}
\end{remark}

\begin{proof}[\textbf{Proof of Theorem \ref{thm:fiber}}]
  Without loss of generality, we assume that $a_0>a_{m-1}>0$ and $a'_0>a'_{m-1}>0$.
  By Theorem \ref{thm:delta} and Theorem \ref{thm_5}, we have  $\delta_{E,{\max}}= \delta_{F,{\max}}$, then
\begin{equation}\label{gamma_delta}
\dfrac{a_0}{a_{m-1}}=\dfrac{a'_0}{a'_{m-1}}.
\end{equation}
Similarly, by Theorem \ref{thm:gamma1} and Remark \ref{rema_gamma} we have $\gamma_{E,{\max}}=\gamma_{F,{\max}}$,
thus
$$
 \dfrac{(1-\sigma)\log (a_{0}/a_{m-1})}{\log N-(1-\sigma)\log a_{0}}=   \dfrac{(1-\sigma)\log (a_{0}'/a_{m-1}')}{\log N'-(1-\sigma)\log a_{0}'}
$$
by using $\gamma_{E,{\max}}-1=\gamma_{F,{\max}}-1$,
summing up with \eqref{gamma_delta} implies that
\begin{equation}\label{eq_dist1}
\dfrac{a_0}{a_0'}=\dfrac{a_{m-1}}{a_{m-1}'}=\left(\dfrac{N}{N'}\right)^{\frac{1}{1-\sigma}}.
\end{equation}
By Theorem \ref{thm:Delta}, we have
$$
 ~\overline{\Delta}(z; \mu_E,\rho)=- \log_{p_0}(a_0/a_{m-1}) \text{ for $\mu_E$-a.e. $z\in E$,}
$$
$$
  ~\overline{\Delta}(f(z); \mu_F,\rho)=- \log_{p_0'}(a'_0/a'_{m-1}) \text{ for $\mu_F$-a.e. $f(z)\in F$,}
$$
where $p_0'=a'_0/{N'}$. So $\log_{p_0}(a_0/a_{m-1})=\log_{p_0'}(a'_0/a'_{m-1})$ by \eqref{eq_MainThm3} with $\overline{\Delta}=\overline{\delta}_{\log|\log r|}$ and it follows that
$$
\dfrac{a_0}{N}=p_0=p_0'=\dfrac{a'_0}{N'},
$$
 which together with \eqref{eq_dist1}  imply $N=N'$.
By \eqref{eq:invariants} and \eqref{eq_dist1}, the theorem is proved.
\end{proof}

\begin{proof}[\textbf{Proof of Theorem \ref{thm:best}}]
Let $E$ and $F$ be two BM-carpets 
 and $E\sim F$.

If $E$ and $F$ are of non-doubling type, the theorem holds by Theorem \ref{thm:fiber}.
If $E$ and $F$ do not possess vacant rows, then $s=s'=m$, thus $N=N'$ by \eqref{eq:invariants} in Theorem \ref{thm:invariants}, the theorem holds in this case.
Hence, in the following, we always assume that $E, F\in {\mathcal M}_{v,d}$.

If $E$ and $F$ are not totally disconnected, since both $E$ and $F$ possess vacant rows, all connected components of $E$ and $F$ are horizontal line segments of length $1$,
then $a_1^*=(a'_1)^*=n$, thus $N=N'$ by \eqref{eq:invariants} in Theorem \ref{thm:invariants}, and the theorem holds in this case.
This proves that the theorem holds provided $E,F\not\in {\mathcal M}_{t,v,d}$.

Finally, by \cite[Theorem 1.4]{Rao2019},  the theorem holds in the case that $E, F\in {\mathcal M}_{t,v,d}$ and $\log m/\log n$ is irrational. The theorem is proved.
\end{proof}


\end{document}